\newcommand{\intav}[1]{\mathchoice {\mathop{\vrule width 6pt height 3 pt depth  -2.5pt
\kern -8pt \intop}\nolimits_{\kern -6pt#1}} {\mathop{\vrule width
5pt height 3  pt depth -2.6pt \kern -6pt \intop}\nolimits_{#1}}
{\mathop{\vrule width 5pt height 3 pt depth -2.6pt \kern -6pt
\intop}\nolimits_{#1}} {\mathop{\vrule width 5pt height 3 pt depth
-2.6pt \kern -6pt \intop}\nolimits_{#1}}}
\def\polhk#1{\setbox0=\hbox{#1}{\ooalign{\hidewidth\lower1.5ex\hbox{`}\hidewidth\crcr\unhbox0}}}
\def\Xint#1{\mathchoice
{\XXint\displaystyle\textstyle{#1}}%
{\XXint\textstyle\scriptstyle{#1}}%
{\XXint\scriptstyle\scriptscriptstyle{#1}}%
{\XXint\scriptscriptstyle\scriptscriptstyle{#1}}%
\!\int}
\def\XXint#1#2#3{{\setbox0=\hbox{$#1{#2#3}{\int}$ }
\vcenter{\hbox{$#2#3$ }}\kern-.6\wd0}}
\def\dashint{\Xint-}
\newcommand{\vmo}{\operatorname{VMO}}
\newcommand{\bmo}{\operatorname{BMO}}
\newcommand{\dist}{\operatorname{dist}}
\newcommand{\tr}{\operatorname{Tr}}
\newcommand{\llip}{\operatorname{Log-Lip}}
\newtheorem{teo}{Theorem}[section]
\newtheorem{Definition}{Definition}[section]
\newtheorem{Lemma}{Lemma}[section]
\newtheorem{Proposition}{Proposition}[section]
\newtheorem{Remark}{Remark}[section]
\newtheorem{Assumption}{A}
\begin{document}

\title{Geometric regularity theory for a time-dependent Isaacs equation}
\author{ P\^edra D. S. Andrade, Giane C. Rampasso and Makson S. Santos*}

\date{\today} 

\maketitle

\begin{abstract}

\noindent The purpose of this work is to produce a regularity theory for a class of parabolic Isaacs equations. Our techniques are based on approximation methods which allow us to connect our problem with a Bellman parabolic model. An approximation regime for the  coefficients, combined with a smallness condition on the source term unlocks new regularity results in Sobolev and H\"older spaces.  

\medskip

\noindent \textbf{Keywords}: Isaacs parabolic equations; Regularity theory; Estimates in Sobolev and H\"older spaces; Approximation methods.

\medskip 

\noindent \textbf{MSC(2020)}: 35B65; 35K55; 35Q91.

\end{abstract}

\vspace{.1in}

\section{Introduction}
\label{introduction}
In this paper, we investigate a regularity theory for $L^p$-viscosity solutions to an Isaacs parabolic equation of the form 
\begin{equation} \label{eq_main}
	u_t +  \sup_{\alpha \in {\cal A}} \inf_{\beta \in {\cal B}}\left[ - {\tr}(A_{\alpha, \beta}(x,t)D^{2} u)\right] = f \quad \text{in} \quad Q_1,
\end{equation}
where $Q_1 := B_1\times (-1, 0]$, $A_{\alpha,\beta}:Q_1\times{\cal A}\times{\cal B} \rightarrow \mathbb{R}^{d^2}$ is a $(\lambda, \Lambda)$-elliptic matrix, $\cal A$ and $\cal B$ are countable sets and the source term $f$ satisfies a set of conditions to be specified later. We produce new results on the regularity for the $L^p$-viscosity solutions to \eqref{eq_main}. In particular, we are interested in obtaining improved regularity in Sobolev and H\"older spaces. We argue by approximation methods relating the model in \eqref{eq_main} to a Bellman parabolic problem.

Approximation methods were introduced by L. Caffarelli in the groundbreaking paper \cite{caffarelli}. More recently, these methods appeared in more general contexts, as developed in the works of E. Teixeira, J.M. Urbano and their collaborators; see for instance \cite{M3}, \cite{M8}, \cite{M1}, \cite{M5}, \cite{M4}. We also refer to the surveys \cite{pim_san}, \cite{M7}. Finally, we emphasize that important results have been obtained by approximation methods in a variety of different settings, for instance see \cite{ku_min2}, \cite{ku_min1}, just to mention a few.

The Isaacs equation appears in several branches of applied mathematics. Originally, it was introduced in the works of R. Isaacs, as the PDE associated with two players zero-sum stochastic differential games, see \cite{isaacs}. We notice a revitalization of the interest in this class of equation as the theory of viscosity solution was introduced. We refer the reader to \cite{cra_ev_li}, \cite{cra_li}, \cite{ev_sou}, \cite{fl_sou} for existence and uniqueness of viscosity solutions. In \cite{kat}, the author developed representation formulas for viscosity solutions in the parabolic setting. See also \cite{bucaqui}. 


The study of the regularity theory for fully nonlinear parabolic equations first appeared in \cite{krysaf2}, \cite{KrySaf}. In these papers, the authors examine linear parabolic equations with measurable coefficients. This analysis enables them to produce a Harnack inequality and develop regularity theory of the solutions to fully nonlinear parabolic equations of the form
\begin{equation}\label{eq_fully}
	u_t+F(D^2u)=0 \,\,\, \mbox{in} \,\,\, Q_1.
\end{equation}
Namely, by a linearization argument, viscosity solutions to \eqref{eq_fully} are of class $\mathcal{C}^{1,\gamma}$, for some $\gamma\in(0,1)$. Under convexity assumptions on the operator $F$, the authors in \cite{krylov1}, \cite{krylov2} proved estimates in $\mathcal{C}_{loc}^{2,\gamma}(Q_1)$ for viscosity solutions to \eqref{eq_fully}.

In \cite{wangI}, \cite{wangII} under the assumptions that the operator with frozen coefficients has appropriated interior estimates, the author establishes several a priori estimates in Sobolev and H\"older spaces for fully nonlinear parabolic equations extending the results in \cite{caffarelli}; see also \cite{cafcab}. A Harnack inequality for fully nonlinear uniformly parabolic equations is treated in \cite{imbersil}, where the authors also study existence, uniqueness and regularity results for viscosity solutions. Under an almost convexity assumption, the authors in \cite{Dong-Krylov-2019} prove weighted and mixed-norm Sobolev estimates for fully nonlinear second-order elliptic and parabolic equations with almost $\vmo$ dependence on space-time variables; see also \cite{Krylov-2018}.

The former developments rely on notions pertinent to the realm of $\mathcal{C}$-viscosity solutions, see \cite{ccks1996}. In \cite{cra_ko_swI} a $L^p$-viscosity theory for fully nonlinear parabolic equations is put forward. More precisely, the authors prove $W^{2,1;p}$-estimates when the matrix $A_{\alpha, \beta}$ is independent of $\beta$. They also establish $\mathcal{C}^{1,\gamma}$-estimates in the case that $p > d+2$. In addition, they obtain H\"older regularity estimates for the gradient for $\mathcal{C}$-viscosity solutions.

Regularity estimates for $L^p$-viscosity solutions are also the subject of \cite{M8}. In that paper, the authors establish a regularity theory for this type of problem involving source terms with mixed norms, under distinct regularity regimes. In particular, they prove optimal interior regularity results in $\mathcal{C}^{0,\gamma}(Q_1)$, $\mathcal{C}^{\llip}(Q_1)$ and $\mathcal{C}^{1, \llip}(Q_1)$ spaces. For sharp regularity estimates in Sobolev spaces we refer to \cite{caspim}. Local regularity in ${\mathcal C}^{1,\gamma}$ spaces is also the subject of \cite{kry2} where the assumptions under the coefficients are different from \cite{cra_ko_swI} and \cite{wangI}.

A remarkable feature of fully nonlinear operators is that every model of the form $F(M)$ can be rewritten as
\[
	\inf_{\alpha \in {\cal A}}\sup_{\beta \in {\cal B}}\{A_{\alpha\beta}(M)\},
\]
for some family $A_{\alpha\beta} = a^{\alpha\beta}_{ij}\partial_{ij}$, see \cite{caf_cab2}. Hence we can get information about the solutions to the parabolic problem governed by $F$ by examining the solutions to an associated Isaacs parabolic problem.

Another point of interest in the regularity theory of the Isaacs equation is due to its nature. Namely, Isaacs operators are neither concave nor convex, which places them off the scope of the Evans-Krylov's theory as developed in \cite{evans}, \cite{krylov1}, \cite{krylov2}. As a consequence we have no a priori reason to expect $\mathcal{C}^{2,\gamma}$-interior estimates or the existence of classical solutions. In fact, in \cite{na_vl} the authors exhibited a solution to an elliptic Isaacs equation whose Hessian blows up in an interior point of the domain. As a conclusion it is not reasonable to expect solutions to be more regular than $\mathcal{C}^{1,\gamma}$, if further conditions are not imposed. 

Furthermore, since the Isaacs equations are positively homogeneous of degree 1 with respect to the Hessian, we are not entitled to resort to the concept of \textit{recession function}, as introduced in \cite{sil_tei}; see also \cite{pim_san} and \cite{tei_pim}. In addition, being positively homogeneous of degree 1, immediately we conclude the Isaacs equation is not driven by a differentiable operator, otherwise it would be a linear operator. So the partial regularity results would not be available for this class of problems.

The purpose of this paper is to extend the results in \cite{Pimentel} to the parabolic setting. In \cite{Pimentel}, the author uses approximation methods to relate solutions of the Isaacs elliptic equation to solutions of the Bellman one, under distinct smallness regimes imposed on the coefficients. The author established that viscosity solutions are locally in $W^{2,p}(B_1)$, ${\mathcal C}^{1,\text{Log-Lip}}(B_1)$, or in ${\mathcal C}^{2,\gamma}(B_1)$, depending on the smallness regimes chosen. 

Under certain smallness regime on the matrix $A_{\alpha,\beta}$, we also use approximation methods to connect $L^p$-viscosity solutions to \eqref{eq_main} with the solutions to a parabolic Bellman equation of the form
\[
	u_t+\inf_{\beta\in\mathcal{B}}\left[-\tr(\bar{A}_{\beta}(x,t)D^2u)\right]=0 \,\,\, \mbox{in} \,\,\, Q_1.
\]
Since the Bellman operator is convex with respect to the Hessian, the idea is to import information from the Evans-Krylov theory to our equation.  

In a first moment, we study the equation with dependence on the gradient 	
\begin{equation} \label{equation03}
	u_t +  \sup_{\alpha \in {\cal A}} \inf_{\beta \in {\cal B}}
	\left[ - {\tr}(A_{\alpha, \beta}(x,t) D^{2} u)-{\bf b}_{\alpha,\beta}(x,t)\cdot Du\right] = f \quad \text{in} \quad Q_1,
\end{equation}
where ${\bf b}_{\alpha,\beta}: Q_1\times\mathcal{A}\times\mathcal{B}\rightarrow\mathbb{R}^d$ is a given vector field. In this case, we follow the ideas in \cite{caspim} and \cite{Pimentel} to obtain Sobolev estimates for viscosity solutions to \eqref{equation03}. The proof of this fact follows from $W^{2,1;\delta}$-estimates for \eqref{eq_main}, combining standard measure-theoretical results and properties of $L^p$-viscosity solutions of fully nonlinear parabolic equations. 

We also deal with the borderline case. In fact, in a different approximation regime and source term conditions, we show that $L^p$-viscosity solutions to \eqref{eq_main} are locally in the parabolic Log-Lipschitz space $\mathcal{C}^{1,\llip}(Q_1)$. Finally, if we refine the approximation regime, we improve the previous regularity result by showing ${C}^{2,\gamma}$-estimates at the origin, for some $0<\gamma<1$.

The remainder of this article is structured as follows: in Section 2 we present our main results; we also gather a few facts used throughout the paper and detail our assumptions. In Section 3, we establish improved regularity in Sobolev spaces. Section 4 is devoted to the proof of improved regularity borderline H\"older spaces. We conclude the paper by establishing $\mathcal{C}^{2,\gamma}$-estimates at the origin for \eqref{eq_main}.

\section{Preliminaries}

\subsection{Notations}
In this section we gather some notations which is used in the paper. The \emph{open ball} in $\mathbb{R}^d$ of radius $r$ and centered at the origin is denoted by $B_r$. We also define the \emph{parabolic domain} by
\[
Q_r\coloneqq B_r\times (-r^2,0]\subset\mathbb{R}^{d+1}
\] 
whose \emph{parabolic boundary} is
\[
\partial_pQ_r\coloneqq B_r\times\{t=-r^2\}\cup\partial B_r\times (-r^2,0].
\]
In addition we define 
\[
Q_r(x_0,t_0) := Q_r + (x_0,t_0).
\]
The \emph{parabolic cube} of side $r$ stands for
\[
K_r := [-r,r]^d\times[-r^2,0].
\]
Given $p\in[1,\infty]$, the \emph{parabolic Sobolev space}  $W^{2,1;p}(Q_r)$ is defined by
	\[
	W^{2,1;p}(Q_r)\coloneqq\{u\in L^{p}(Q_r):u_t,\, Du,\, D^2u \in L^p(Q_r)\}
	\]
	endowed with the natural norm
	\[
	\|u\|_{W^{2,1;p}(Q_r)}=\left[\|u\|^p_{L^p(Q_r)}+\|u_t\|^p_{L^p(Q_r)}+\|Du\|^p_{L^p(Q_r)}+\|D^2u\|^p_{L^p(Q_r)}\right]^{\frac{1}{p}}.
	\]
	Hence, we say that $u\in W_{loc}^{2,1;p}(Q_r)$ if $u\in W^{2,1;p}(Q')$ for all $Q'\Subset Q_r$.
	
	In order to define the parabolic H\"older space, we introduce the \emph{parabolic distance} between the points $(x_1,t_1)$ and $(x_2,t_2)$ in $Q_r$ by
	\[
	\dist((x_1,t_1),(x_2,t_2))\coloneqq\sqrt{|x_1-x_2|^2+|t_1-t_2|}.
	\]
	Therefore, a function $u:Q_r\rightarrow\mathbb{R}$ belongs to the \emph{parabolic H\"older space} $\mathcal{C}^{0,\gamma}(Q_r)$ if the following norm 
	\[
	\|u\|_{\mathcal{C}^{0,\gamma}(Q_r)}\coloneqq \|u\|_{L^{\infty}(Q_r)}+[u]_{\mathcal{C}^{0,\gamma}(Q_r)}
	\]
	is finite, where $[u]_{\mathcal{C}^{0,\gamma}(Q_r)}$ denotes the semi-norm
	\[
	[u]_{\mathcal{C}^{0,\gamma}(Q_r)}\coloneqq \sup_{\substack{(x_1,t_1),(x_2,t_2)\in Q_r \\ (x_1,t_1)\neq(x_2,t_2)}}\frac{|u(x_1,t_1)-u(x_2,t_2)|}{\dist((x_1,t_1),(x_2,t_2))^{\gamma}}.
	\]
	This means that $u$ is $\gamma$-H\"older continuous with respect to the spatial variables and $\frac{\gamma}{2}$-H\"older continuous with respect to the time variable. 
	
	Similarly, we say that $u\in \mathcal{C}^{1,\gamma}(Q_r)$ if the spatial gradient $Du(x,t)$ exists in the classical sense for every $(x,t)\in Q_r$ and the norm
	\begin{equation*}
		\begin{aligned}
			\|u\|_{\mathcal{C}^{1,\gamma}(Q_r)}&\coloneqq \|u\|_{L^{\infty}(Q_r)}+\|Du\|_{L^{\infty}(Q_r)}\\
			&+ \sup_{\substack{(x_1,t_1),(x_2,t_2)\in Q_r \\ (x_1,t_1)\neq(x_2,t_2)}}\frac{|u(x_1,t_1)-u(x_2,t_2)-Du(x_1,t_1)\cdot(x_1-x_2)|}{\dist((x_1,t_1),(x_2,t_2))^{1+\gamma}}.
		\end{aligned}
	\end{equation*}
is finite; \emph{i.e.} $Du$ is $\gamma$-H\"older continuous in the spatial variables and $u$ is $\frac{1+\gamma}{2}$-H\"older continuous with respect to the variable $t$. For the borderline case $\gamma = 1$, we say that solutions are of class ${\mathcal C}^{1, 1}(Q_r)$ if $Du$ is Lipschitz continuous with respect to spatial variable and $u$ is also Lipschitz continuous with respect to time variable. Finally, $u\in\mathcal{C}^{2,\gamma}(Q_r)$ if, for all $(x,t)\in Q_r$, the derivative with respect to the temporal variable $u_t(x,t)$ and the spatial Hessian $D^2u(x,t)$ exist in the classical sense and
\begin{equation*}
	\begin{aligned}
		\|u\|_{\mathcal{C}^{2,\gamma}(Q_r)}&\coloneqq \|u\|_{L^{\infty}(Q_r)}+\|u_t\|_{\mathcal{C}^{0,\gamma}(Q_r)}+\|Du\|_{\mathcal{C}^{1,\gamma}(Q_r)}<+\infty.
	\end{aligned}
\end{equation*}
Hence, every component of the Hessian $D^2u$ is $\gamma$-H\"older continuous with respect to the spatial variables and the derivative of $u$ with respect to the time variable $u_t$ is $\frac{\gamma}{2}$-H\"older continuous in $t$.

Lastly, we say that $u$ belongs to ${\mathcal C}^{1, \llip}_{loc}(Q_r)$ if $u$ satisfies the following estimate
\[
	\displaystyle\sup_{Q_{r/2}(x_0,t_0)}\big|u(x,t)-[u(x_0,t_0)+Du(x_0,t_0)\cdot x]\big|\leq Cr^2\ln r^{-1},
	\]
	for some universal constant $C>0$ .
 We refer to \cite{cra_ko_swI}, \cite{M8}, \cite{imbersil} for more details.

\subsection{Assumptions and main results}

In this subsection, we detail the assumptions and main results of the paper. The first assumption concerns the conditions imposed on the matrix $A_{\alpha,\beta}$.

\begin{Assumption}[\it Ellipticity of $\mathcal{A}_{\alpha,\beta}$]\label{assumption1}\rm
	Let $\alpha\in\mathcal{A}$ and $\beta\in\mathcal{B}$, where $\mathcal{A},\mathcal{B}$ are countable sets. We assume that the matrix $A_{\alpha,\beta}:Q_1\times \mathcal{A}\times\mathcal{B}\to\mathbb{R}^{d^2}$ is $(\lambda,\Lambda)$-elliptic; \emph{i.e.} there are constants $0<\lambda\leq\Lambda$ such that 
	\[
	\lambda I\leq A_{\alpha,\beta}(x,t)\leq \Lambda I
	\]
	for every $(x,t)\in Q_1$.
\end{Assumption}

In the sequel, we introduce some integrability conditions on the source term.

\begin{Assumption}[\it Regularity of the source term]\label{assumptionsourceterm}
	Let $p>d+1$. We suppose $f\in L^p(Q_1).$ 
\end{Assumption}

An important ingredient in the analysis of the Sobolev regularity regards the smallness regime described in the next assumption. 

\begin{Assumption}[\it Smallness regime for regularity in $W^{2,1;p}$]\label{assumptionsobolev}
	We assume that the matrix $\bar{A}_{\beta}:Q_1\times {\cal B}\to\mathbb{R}^{d^2}$  satisfies
	\[
	|A_{\alpha,\beta}(x,t)-\bar{A}_{\beta}(x,t)| \leq \varepsilon_1
	\]
	uniformly in $(x,t)$, $\alpha$ and $\beta$,	where $\varepsilon_1>0$ is a sufficiently small constant that will be determined later. 
\end{Assumption}

It is worth noting that due to the parabolic nature, we need a slightly stronger assumption than in \cite{Pimentel}. Here, we assume that solutions to our approximated problem have ${\mathcal C}^{1,1}$-estimates. The next condition is fundamental to produce Sobolev estimates, since we connect the equation \eqref{equation03} to a Bellman parabolic model.

\begin{Assumption} [\it ${\mathcal C}^{1,1}$- estimates for the parabolic Bellman model] \label{assumption3}
	Let $v\in\mathcal{C}(Q_{8/9})$ be a $L^p$-viscosity solution to
	\[
	v_t+\inf_{\beta \in {\cal B}}[-\tr(\bar{A}_{\beta}(x,t)D^2v)]=0 \ \ \mbox{in} \ \ Q_{8/9}.
	\]
Then $v\in {\mathcal C}^{1,1}(Q_{3/4})\cap\mathcal{C}(\bar{Q}_{3/4})$. Moreover, there exists a universal constant $C>0$ such that
	\[
	\|v\|_{{\mathcal C}^{1,1}(Q_{3/4})}\leq C\|v\|_{L^{\infty}(Q_{8/9})}.
	\]
\end{Assumption}

In order to obtain Sobolev estimates to equation \eqref{equation03}, some assumptions on the lower-order coefficients are required.

\begin{Assumption}[\it The vector $\bf{b}_{\alpha,\beta}$]\label{assumption_vectorb} \rm
	We assume that $\textbf{b}_{\alpha,\beta}\in L^\infty(Q_1)$ uniformly in $\alpha$ and $\beta$; \emph{i.e.}, there exists a constant $C > 0$ such that
	\[
	\sup_{\alpha \in {\cal A}}\sup_{\beta \in {\cal B}}\|\textbf{b}_{\alpha,\beta}\|_{L^{\infty}(Q_1)}\leq C.
	\]
\end{Assumption} 

For the proof of parabolic $\mathcal{C}^{1, \llip}$-estimates, we need to refine the smallness regime on the matrix $A_{\alpha,\beta}$. This is the content of our next assumption.

\begin{Assumption}[\it Smallness regime for regularity in $\mathcal{C}^{1,\llip}$] \label{assumption4}
	We suppose $f \in \bmo(Q_1)$; \emph{i.e.}, for all $Q_r(x_0, t_0) \subset Q_1$, we have 
	\[
	\| f \|_{\bmo(Q_1)}:= \sup_{0<r\leq 1}\dashint_{Q_r(x_0, t_0)} |f(x,t)- \langle f\rangle_{(x_0, t_0), r}| dxdt< \infty,
	\]
	where $\langle f\rangle_{(x_0, t_0), r}:= \displaystyle\dashint_{Q_r(x_0, t_0)} f(x,t) dxdt$. In addition, for every $Q_{r}(x_0, t_0)\subset Q_1,$ we assume that
	\[
	\displaystyle\sup_{Q_{r}(x_0, t_0)}|A_{\alpha,\beta}(x,t)-\bar{A}_{\beta}(x_0,t_0)| \leq\varepsilon_2,
	\]
	uniformly in $\alpha$ and $\beta,$ where $\varepsilon_2>0$ is a sufficiently small constant that will be determined later.
\end{Assumption}

Our last assumption concerns ${\mathcal C}^{2,\gamma}$-estimates at the origin of solutions to \eqref{eq_main} which requires an additional smallness condition.

\begin{Assumption}[\it Smallness regime for ${\mathcal C}^{2,\gamma}$-estimates at the origin]\label{assumption5}
Assume that 
	\[
	\sup_{(x,t)\in Q_{r}}\sup_{\alpha \in {\cal A}}\sup_{\beta \in {\cal B}}|A_{\alpha,\beta}(x,t)-\bar{A}_{\beta}(0,0)|\leq\varepsilon_3 r^{\gamma}.
	\]
In addition, we suppose  
\[
\dashint_{Q_r}|f(x,t)|^pdx \leq \varepsilon_3^pr^{\gamma p},
\]
where $\varepsilon_3>0$ is a sufficiently small constant that will be determined later.
\end{Assumption}

An example of a matrix that satisfies A$\ref{assumption5}$ is given by $A_{\alpha,\beta}(x,t):= \bar{A}_\beta(0,0) + \varepsilon_3|x|^\gamma$.

 It is worth to highlight that, since the assumptions to be made throughout this manuscript are exactly the parabolic counterpart of those imposed in the stationary case, our results are also dependendent on the smallness regime imposed on the coefficients as in \cite{Pimentel}. 

At this point, we put forward our main results. At first, we study the gradient-dependent equation \eqref{equation03}. Under the assumption that the coefficients are uniformly close to the Bellman one, we prove that solutions to \eqref{equation03} are of class $W_{loc}^{2,1;p}(Q_1)$.

\begin{teo}\label{theorem01}
	Let $d +1<p$ and $u \in {\cal C}(Q_1)$ be a $L^p$-viscosity solution to \eqref{equation03}. Assume that A\ref{assumption1}-A\ref{assumption_vectorb} are in force. Then, $u \in W_{loc}^{2,1;p}(Q_1)$ with the estimate
	\[
	\|u\|_{{ W^{2,1;p}}(Q_{1/2})} \leq C \left( \|u\|_{L^{\infty}(Q_1)} +  \|f\|_{L^{p}(Q_1)}\right),
	\]
	where $C>0$ is a constant depending on $d, \lambda, \Lambda, p, \displaystyle\sup_{\alpha \in {\cal A}}\sup_{\beta \in {\cal B}}\|\textbf{b}_{\alpha,\beta}\|_{L^{\infty}(Q_1)}$.	
\end{teo}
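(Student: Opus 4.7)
The strategy is the standard approximation scheme of Caffarelli adapted to the parabolic $L^p$-viscosity setting, using the Bellman equation in A\ref{assumption3} as the regular surrogate model. After a scaling reduction to $\|u\|_{L^\infty(Q_1)}\le 1$ and $\|f\|_{L^p(Q_1)}\le \delta_0$ for a small universal $\delta_0$, the argument proceeds in three conceptual steps: an approximation lemma, a paraboloid touching argument, and a measure-theoretic iteration producing integrability of $D^2u$.

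\textbf{Step 1: Approximation lemma.} Using A\ref{assumption1}, A\ref{assumptionsobolev}, A\ref{assumption_vectorb}, and the compactness/stability machinery for $L^p$-viscosity solutions (see \cite{cra_ko_swI}), I would show: given $\eta>0$, one can choose $\varepsilon_1,\delta_0>0$ so small that for every solution $u$ of \eqref{equation03} with $\|u\|_{L^\infty(Q_1)}\le 1$ there exists an $L^p$-viscosity solution $v$ of
\[
v_t+\inf_{\beta\in\mathcal{B}}[-\tr(\bar{A}_\beta(x,t)D^2 v)]=0\quad\text{in }Q_{8/9},
\]
with $\|u-v\|_{L^\infty(Q_{8/9})}\le\eta$ and $v=u$ on $\partial_p Q_{8/9}$. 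The drift term $\mathbf{b}_{\alpha,\beta}\cdot Du$ is controlled through Pucci-type extremal inequalities combined with A\ref{assumption_vectorb}, so that it effectively enters the right-hand side with a small factor after scaling.

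\textbf{Step 2: Paraboloid control.} By A\ref{assumption3} one has $v\in\mathcal{C}^{1,1}(Q_{3/4})$ with $\|v\|_{\mathcal{C}^{1,1}(Q_{3/4})}\le C\|v\|_{L^\infty(Q_{8/9})}\le C$. At every $(x_0,t_0)\in Q_{1/2}$ this yields a universal parabolic paraboloid $P_{(x_0,t_0)}$ of opening bounded by a universal $N_0>0$ satisfying $|v-P_{(x_0,t_0)}|\le N_0 r^2$ on $Q_r(x_0,t_0)$. Combining with Step 1, $u$ is touched from above and below by paraboloids of opening $\sim N_0$ on a suitable scale. Defining the maximal opening function
\[
\Theta(u)(x,t):=\inf\{N>0: u\text{ is touched at }(x,t)\text{ by paraboloids of opening }N\text{ in some }Q_r(x,t)\},
\]
this step yields a good-set estimate of the form $|\{\Theta(u)\le N_0\}\cap Q_{1/2}|\ge \mu|Q_{1/2}|$ for a universal $\mu\in(0,1)$.

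\textbf{Step 3: Iteration and conclusion.} Rescaling \eqref{equation03} to small subcylinders preserves the assumptions (here the parabolic scaling for the drift is where the boundedness in A\ref{assumption_vectorb} is needed), so the good-set estimate can be applied inductively. A parabolic Calder\'on--Zygmund decomposition (see \cite{wangI,wangII,caspim}) then yields a geometric decay $|\{\Theta(u)>M^k\}\cap Q_{1/2}|\le C\sigma^k$ with $\sigma<M^{-p}$, provided $\varepsilon_1,\delta_0$ are tuned small enough relative to $p$. This places $\Theta(u)\in L^p(Q_{1/2})$ and, by standard equivalence, $D^2u\in L^p(Q_{1/2})$. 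Feeding this back into \eqref{equation03} and using A\ref{assumption1}, A\ref{assumption_vectorb} bounds $u_t$ in $L^p$, while $Du\in L^p$ follows by interpolation from the bound on $u$ and on $D^2u$.

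\textbf{Main obstacle.} The most delicate point is the approximation lemma in the presence of the gradient term: the $L^p$-viscosity stability theory with first-order terms is subtle because one has to pass to the limit in a drift of bounded, but not necessarily continuous, coefficients. A by-contradiction compactness scheme requires uniform H\"older estimates independent of the drift and then identification of the limit equation as the Bellman model, and tuning $\varepsilon_1$ accordingly so that the iteration constant $\sigma$ in Step 3 is truly strictly less than $M^{-p}$ for the given $p>d+1$.
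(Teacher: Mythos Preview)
Your outline is essentially correct and follows the Caffarelli scheme faithfully, but it differs from the paper's argument in how the gradient term is handled. You propose to carry the drift $\mathbf{b}_{\alpha,\beta}\cdot Du$ through the whole machinery, making it small by an initial parabolic rescaling and then showing it remains harmless at every scale of the iteration; you correctly flag the approximation lemma with drift as the delicate point. The paper instead decouples the two issues: it first proves the $W^{2,1;p}$ estimate for the gradient-free equation \eqref{eq_main} (this is Proposition~\ref{prop_sob}, whose proof is exactly your Steps~1--3 with $\mathbf{b}\equiv 0$), and only afterwards treats \eqref{equation03}. For the latter it invokes a.e.\ parabolic twice differentiability of $L^p$-viscosity solutions and the $W^{1,p}$ gradient bound from \cite[Theorem~7.3]{cra_ko_swI} to absorb $\mathbf{b}_{\alpha,\beta}\cdot Du$ into a new $L^p$ right-hand side $g$, then shows $u$ is an $L^p$-viscosity solution of the gradient-free equation with source $g$ and applies Proposition~\ref{prop_sob}. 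This reduction completely sidesteps the obstacle you identify: no compactness or stability argument with a bounded-measurable drift is needed, at the price of importing the a priori $Du\in L^p$ estimate from \cite{cra_ko_swI}. Your route is more self-contained but technically heavier in the approximation step; the paper's is more modular and leverages existing first-order theory.
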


We point out that in \cite{Escauriaza}, the author proves $W^{2,p}$ estimates in the elliptic setting for $p> d - \varepsilon$, where $\varepsilon$ is a positive constant and depending on the ellipticity coefficients. This number $\varepsilon$ is well-known in the literature as {\it Escauriaza's exponent}.  According to \cite[Remark I]{Escauriaza}, these results can be produced for the parabolic scenario. However, as far as we know no results in that direction have been produced.  The essential ingredients for the proof of this result can be found in \cite{Cerutti-Grimaldi-07, Chen-17, Escauriaza-2000}; namely, Green's functions properties associated with some linear operators and well-posedness to certain parabolic problems. See also \cite[Section 5]{caspim} and the references therein.

Notice that the matrix $A_{\alpha, \beta}$ depends on $\beta$ in Theorem \ref{theorem01}, it implies that the operator {\it is not} convex; compare with \cite[Theorem 9.1]{cra_ko_swI}. An adjustment in the smallness regime leads us to our second main result that regards the parabolic ${\mathcal C}^{1,\llip}$ regularity.

\begin{teo}\label{theorem02}
	Let $u\in\mathcal{C}(Q_1)$ be a $L^p$-viscosity solution to \eqref{eq_main} and $(x_0,t_0)\in Q_{1/2}$. Suppose A\ref{assumption1} and A\ref{assumption4} are in force. Then $u\in\mathcal{C}^{1,\llip}_{loc}(Q_1)$; \emph{i.e.}, there exist a universal constant $C>0$ and $0<r\leq 1/2$ such that
	\small
	\[
	\displaystyle\sup_{Q_{r}(x_0,t_0)}\big|u(x,t)-[u(x_0,t_0)+Du(x_0,t_0)\cdot x]\big|\leq C\left(\|u\|_{L^{\infty}(Q_1)}+\|f\|_{\bmo(Q_1)}\right)r^2\ln r^{-1}.
	\]
\end{teo}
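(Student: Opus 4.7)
The plan is to implement Caffarelli's approximation framework, tailored to the parabolic $\bmo$-source setting and following the elliptic argument of \cite{Pimentel}. The global strategy is: compare $u$ to a $L^p$-viscosity solution of a frozen parabolic Bellman equation (whose $\mathcal{C}^{1,1}$-regularity is guaranteed by A\ref{assumption3}), then iterate this comparison at geometric scales $\rho^n$ around $(x_0,t_0)$. The $\ln r^{-1}$ factor will emerge from the linear growth of the averages $\langle f\rangle_{Q_{\rho^n}}$ under parabolic rescaling, even though the $\bmo$-seminorm itself is scale-invariant.

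After translating $(x_0,t_0)$ to the origin and normalizing so that $\|u\|_{L^\infty(Q_1)}+\|f\|_{\bmo(Q_1)}$ is small (using the linear dependence of the desired estimate on these quantities), I would first prove an approximation lemma: for every $\eta>0$ there is $\varepsilon_2>0$ (the constant in A\ref{assumption4}) such that any $L^p$-viscosity solution $u$ of \eqref{eq_main} is $\eta$-close in $L^\infty(Q_{8/9})$ to a $L^p$-viscosity solution $h$ of
\[
h_t + \inf_{\beta\in\mathcal{B}}\bigl[-\tr(\bar A_\beta(0,0)\,D^2 h)\bigr] = \langle f\rangle_{Q_1} \quad\text{in }Q_{8/9}, \qquad h=u \text{ on }\partial_p Q_{8/9}.
\]
This is standard: argue by contradiction, extract an equicontinuous subsequence via the Krylov--Safonov-type H\"older estimates (see \cite{imbersil}), and use the stability of $L^p$-viscosity solutions from \cite{cra_ko_swI} to pass to the limit. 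Because $\tilde h(x,t):=h(x,t)-\langle f\rangle_{Q_1}\,t$ satisfies the homogeneous Bellman equation, A\ref{assumption3} supplies universal $\mathcal{C}^{1,1}$-bounds on $\tilde h$ in $Q_{3/4}$. Taylor expanding at the origin and absorbing the quadratic-in-$x$, linear-in-$t$ and $\langle f\rangle_{Q_1}\,t$ contributions into $O(\rho^2)$ (since $|t|\le\rho^2$ in $Q_\rho$), I extract an affine $L(x)=a+b\cdot x$ with $|a|+|b|\le C$ and $\sup_{Q_\rho}|u-L|\le C_0\rho^2$ for a suitably small $\rho\in(0,1)$.

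I would then iterate this one-step decay to construct affine functions $L_n(x)=a_n+b_n\cdot x$ satisfying
\[
\sup_{Q_{\rho^n}}|u-L_n|\le K\,n\,\rho^{2n}, \qquad |a_{n+1}-a_n|+\rho^n|b_{n+1}-b_n|\le K\,n\,\rho^{2n},
\]
by applying the previous step at scale $n$ to the rescaled function $v_n(x,t):=(Kn\rho^{2n})^{-1}\bigl[u(\rho^n x,\rho^{2n}t)-L_n(\rho^n x)\bigr]$. The iteration closes because A\ref{assumption4} is preserved by parabolic rescaling (it is imposed over every subcylinder of $Q_1$) and because the rescaled source $f_n/(Kn)$ has $\bmo$-seminorm $O(1/n)$, while its mean---which grows only like $|\langle f\rangle_{Q_{\rho^n}}|\lesssim n\|f\|_{\bmo}$ by the dyadic doubling of $\bmo$ averages---is absorbed into the right-hand side of the frozen Bellman problem via a time-linear corrector. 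Given $r\in(0,1/2)$, picking $n$ with $\rho^{n+1}<r\le\rho^n$ and summing the uniformly controlled increments of $(a_n,b_n)$ (which give $b_n\to Du(0,0)$) yields the desired $\sup_{Q_r}|u-(u(0,0)+Du(0,0)\cdot x)|\le C\,r^2\ln r^{-1}$.

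The main technical obstacle is precisely this bookkeeping: the \emph{mean} of the rescaled source is bounded but not small, so the rescaled equation does not directly fall within the regime of the approximation lemma. Resolving this requires peeling the mean off at each step into the frozen Bellman problem (as a time-linear corrector) and carrying only the uniformly small $\bmo$-oscillation into the approximation argument. This linear, rather than geometric, accumulation of defects across scales is what converts the $\mathcal{C}^{1,\gamma}$-type improvement one would expect for $\mathcal{C}^\alpha$ sources into the borderline $\mathcal{C}^{1,\llip}$ modulus of Theorem \ref{theorem02}.
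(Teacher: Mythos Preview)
Your overall strategy is right, but the iteration as you describe it does not close. After normalizing so that $|v_n|\le 1$ and approximating by the frozen Bellman solution $h$, Taylor expanding $h$ only to first order gives an affine $L$ with $\sup_{Q_\rho}|v_n-L|\le \delta+C_0\rho^2$, where $C_0$ is the universal ${\mathcal C}^{1,1}$ constant of $h$. There is no extra power of $\rho$ here, so you cannot force $C_0\le 1$ by shrinking $\rho$. Rescaling back yields $\sup_{Q_{\rho^{n+1}}}|u-L_{n+1}|\le Kn\rho^{2n}(\delta+C_0\rho^2)$, and to continue the induction you need this to be at most $K(n+1)\rho^{2(n+1)}$, i.e.\ $n(\delta/\rho^2+C_0)\le n+1$. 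For large $n$ this forces $C_0\le 1$, which is generically false. Thus the purely affine scheme with decay $Kn\rho^{2n}$ breaks down; the best it can salvage (by weakening the decay to $\rho^{(2-\epsilon)n}$) is ${\mathcal C}^{1,1-\epsilon}$, not ${\mathcal C}^{1,\llip}$.

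The paper avoids this by approximating with \emph{second-order} polynomials $P_n(x,t)=a_n+b_n\cdot x+c_nt+\tfrac12 x^td_nx$ and proving $\sup_{Q_{\rho^n}}|u-P_n|\le\rho^{2n}$ (no factor $n$ at this stage). Because the frozen Bellman operator has constant coefficients and is convex, Evans--Krylov gives $h\in{\mathcal C}^{2,\bar\gamma}$, so the quadratic Taylor remainder is $C\rho^{2+\bar\gamma}$; the extra $\rho^{\bar\gamma}$ is exactly what allows the universal choice of $\rho$ with $\delta+C\rho^{2+\bar\gamma}\le\rho^2$, closing the induction. The logarithm enters only at the end: the increments satisfy $|c_n-c_{n-1}|+|d_n-d_{n-1}|\le C$, hence $|c_n|+|d_n|\le Cn$, and peeling off the quadratic part of $P_n$ costs $(|c_n|+|d_n|)\rho^{2n}\le Cn\rho^{2n}\sim Cr^2\ln r^{-1}$. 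So the $\ln r^{-1}$ is produced by the linear growth of the \emph{quadratic} coefficients along the iteration, not by the $\bmo$ means as such (the mean is frozen once at $\langle f\rangle_1$ and carried through the constraint $c_n+\inf_\beta[-\tr(\bar A_\beta(0,0)d_n)]=\langle f\rangle_1$). A related minor point: A\ref{assumption3} is not among the hypotheses of Theorem~\ref{theorem02}; the paper relies on Evans--Krylov for the constant-coefficient Bellman problem, and that ${\mathcal C}^{2,\bar\gamma}$ regularity (not merely ${\mathcal C}^{1,1}$) is essential for the reason above.
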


Lastly, assuming additional conditions on the source term, and refining the smallness regime, we are able to obtain $C^{2,\gamma}$-estimates at the origin for solutions to \eqref{eq_main}. It is the content of our last main theorem.

\begin{teo}\label{theorem03}
	Let $u \in {\mathcal C}(Q_1)$ be a $L^p$-viscosity solution to \eqref{eq_main}. Suppose that assumptions A\ref{assumption1} and A\ref{assumption5} are in force. Then, $u$ is ${\mathcal C}^{2,\gamma}$ at the origin, \emph{i. e.}, there exists a polynomial $P$ of degree 2 and a constant $C>0$ such that 
\[
\|u - P\|_{L^\infty(Q_r)} \leq Cr^{2+\gamma},
\] 
with
\[
|DP(0,0)| + \|D^2P(0,0)\| \leq C,
\]
for all $0<r \ll 1$.
\end{teo}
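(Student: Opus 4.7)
The strategy is a classical Caffarelli-type iteration at a point, adapted to the parabolic Isaacs setting and pairing the approximation machinery of Theorem \ref{theorem01} with the Evans--Krylov theory for the frozen Bellman model. First I would prove an approximation lemma: for every $\delta>0$ there exist $\varepsilon_3>0$ and $\rho\in(0,1)$ such that, whenever $u$ is a normalized $L^p$-viscosity solution of \eqref{eq_main} in $Q_1$ with A\ref{assumption1} and A\ref{assumption5} in force at $(0,0)$ with this $\varepsilon_3$, there is a solution $h\in\mathcal{C}^{1,1}(Q_{3/4})$ of the constant-coefficient Bellman equation
\[
h_t+\inf_{\beta\in\mathcal{B}}\bigl[-\tr(\bar A_\beta(0,0)D^2 h)\bigr]=0\quad\text{in }Q_{8/9},
\]
with $\|u-h\|_{L^\infty(Q_{3/4})}\le\delta$. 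This is a compactness/stability argument: sequences violating the conclusion produce, by the Krylov--Safonov $\mathcal{C}^{0,\alpha}$-estimates, a uniform limit which by the standard stability of $L^p$-viscosity solutions (see \cite{cra_ko_swI}) solves the limiting Bellman equation, contradicting the assumption.

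Having obtained $h$, I would invoke the Evans--Krylov theorem for the constant-coefficient convex Bellman operator (more precisely, its parabolic counterpart, combined with A\ref{assumption3}) to produce $\gamma_0\in(0,1)$ and a universal constant $C_0$ together with a quadratic polynomial $\tilde P$ (linear in $t$, quadratic in $x$) with $|D\tilde P(0,0)|+\|D^2\tilde P(0,0)\|\le C_0$ and
\[
\|h-\tilde P\|_{L^\infty(Q_\rho)}\le C_0\rho^{2+\gamma_0}.
\]
Fix $\gamma\in(0,\gamma_0)$ and shrink $\rho$ so that $C_0\rho^{2+\gamma_0}\le \tfrac12\rho^{2+\gamma}$, then pick $\delta=\tfrac12\rho^{2+\gamma}$ and the corresponding $\varepsilon_3$ from the approximation lemma. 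Combining the two estimates yields the one-step polynomial approximation
\[
\|u-\tilde P\|_{L^\infty(Q_\rho)}\le\rho^{2+\gamma}.
\]

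Next I would iterate. The claim to be proved by induction on $k\ge 0$ is the existence of a quadratic polynomial $P_k(x,t)=a_k+b_k\cdot x+\tfrac12 x^\top M_k x+c_k t$ with
\[
\|u-P_k\|_{L^\infty(Q_{\rho^k})}\le\rho^{k(2+\gamma)},\qquad
|b_{k+1}-b_k|\rho^k+(\|M_{k+1}-M_k\|+|c_{k+1}-c_k|)\rho^{2k}\le C\rho^{k(2+\gamma)}.
\]
For the inductive step, rescale
\[
v_k(x,t):=\frac{(u-P_k)(\rho^k x,\rho^{2k}t)}{\rho^{k(2+\gamma)}},
\]
so that $\|v_k\|_{L^\infty(Q_1)}\le 1$ and $v_k$ solves a rescaled Isaacs equation with coefficients $A^{(k)}_{\alpha,\beta}(x,t):=A_{\alpha,\beta}(\rho^k x,\rho^{2k}t)$ and frozen centers $\bar A^{(k)}_\beta(0,0)=\bar A_\beta(0,0)$. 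The crucial scaling check is that A\ref{assumption5} is preserved: the Hessian and time-derivative appear multiplied by $\rho^{-k\gamma}$ in the rescaled equation, but the coefficient oscillation $\varepsilon_3(\rho^k r)^\gamma$ and the source condition $\fint_{Q_{\rho^k r}}|f|^p\le\varepsilon_3^p(\rho^k r)^{\gamma p}$ each contribute exactly the compensating factor $\rho^{k\gamma p}$ (respectively $\rho^{k\gamma}$), so that $v_k$ still satisfies A\ref{assumption5} with the same $\varepsilon_3$. Applying the one-step approximation to $v_k$ and rescaling back produces $P_{k+1}$; iterating and summing the Cauchy telescoping bounds gives a limit polynomial $P$ whose coefficients satisfy the stated bound, and the estimate $\|u-P\|_{L^\infty(Q_r)}\le C r^{2+\gamma}$ follows by interpolating over dyadic scales.

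The main obstacle I anticipate is the scaling verification for A\ref{assumption5} under the iteration, more specifically ensuring that the subtracted quadratic polynomial $P_k$ is itself admissible as a test function in the rescaled Isaacs equation (which in turn requires that the extra terms $c_k+\inf_\alpha\sup_\beta[-\tr(A_{\alpha,\beta}M_k)-\mathbf{b}\cdot b_k]$ introduced by the subtraction are absorbed into an admissible source) and that the resulting rescaled source still obeys the $L^p$-smallness encoded in A\ref{assumption5}. A secondary but routine issue is the careful statement of the approximation lemma when $f\in L^p$ only in the integral sense of A\ref{assumption5}, which requires using the $L^p$-viscosity stability from \cite{cra_ko_swI,M8} rather than uniform convergence of sources.
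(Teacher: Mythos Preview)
Your proposal is correct and follows essentially the same route as the paper: an approximation lemma by compactness and stability, Evans--Krylov $\mathcal{C}^{2,\bar\gamma}$ regularity for the frozen Bellman model, a one-step quadratic approximation, and then the iterative rescaling $v_k=(u-P_k)(\rho^k x,\rho^{2k}t)/\rho^{k(2+\gamma)}$ with the scale-invariance of A\ref{assumption5} doing the work. Two small clean-ups: Theorem~\ref{theorem03} concerns \eqref{eq_main} (no gradient term) and assumes only A\ref{assumption1} and A\ref{assumption5}, so your invocation of A\ref{assumption3} and the $\mathbf{b}\cdot b_k$ contribution are superfluous---Evans--Krylov applied to the constant-coefficient convex Bellman operator already yields $\mathcal{C}^{2,\bar\gamma}$; and the paper makes the constraint $c_k+\inf_\beta[-\tr(\bar A_\beta(0,0)M_k)]=0$ explicit at each step (it is automatic for you since $\tilde P$ is the Taylor polynomial of a solution $h$), which is exactly what resolves the ``absorbed into an admissible source'' issue you flag.
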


It is worth noting that in \cite{wangII} the authors establish ${\mathcal C}^{2,\gamma}$ regularity for solutions, relying on ${ \mathcal C}^{2,\gamma}$-estimates for the operator with frozen coefficients, which is not our case.  We also notice that Theorem \ref{theorem03} \emph{does not} implies local ${\mathcal C}^{2,\gamma}$-regularity, unless assumption A$\ref{assumption5}$ holds for every $(x,t)$, see Remark \ref{last_rem}. Throughout the paper, we use some definitions and preliminary results, which are described in the next section.

\begin{Remark}
The authors believe that these results can be extended to operators with zero-th order terms of the form
\[
G(D^2u, u_t, u, x, t) := u_t +  \sup_{\alpha \in {\cal A}} \inf_{\beta \in {\cal B}}\left[ - {\tr}(A_{\alpha, \beta}(x,t)D^{2} u) + a_{\alpha,\beta}(x,t)u(x,t)\right],
\]
by imposing that
\[
	\sup_{\alpha \in {\cal A}}\sup_{\beta \in {\cal B}}\|a_{\alpha,\beta}\|_{L^{\infty}(Q_1)}\leq C.
\]
\end{Remark}

\begin{Remark} 
In the proof of Theorem \ref{theorem01}, we consider the following smallness regimes
\begin{equation}\label{eq_scal}
\|u\|_{L^\infty(Q_1)} \leq 1 \;\;\mbox{ and } \;\; \|f\|_{L^p(Q_1)} \leq \varepsilon_1,
\end{equation} 
for some $\varepsilon_1$ to be determined. The conditions in \eqref{eq_scal} are not restrictive. Indeed, if we consider the auxiliary function
\[
v(x,t) = \dfrac{u(\rho x, \rho^2t)}{K},
\]
with $0 < \rho \ll 1$ and $K > 0$, then $v$ solves 
\[
v_t +  \sup_{\alpha \in {\cal A}} \inf_{\beta \in {\cal B}}\left[ - {\tr}(\tilde{A}_{\alpha, \beta}(x,t) D^{2} v)-\tilde{{\bf b}}_{\alpha,\beta}(x,t)\cdot Dv\right] = \tilde{f} \quad \text{in} \quad Q_1,
\]
in the viscosity sense,  where 
\[
\tilde{A}_{\alpha, \beta}(x,t)  =  A_{\alpha, \beta}(\rho x,\rho^2t),\quad \tilde{{\bf b}}_{\alpha,\beta}(x,t) = \rho{\bf b}_{\alpha,\beta}(\rho x,\rho^2t),
\]
and
\[
\tilde{f}(x,t) = \frac{\rho^2}{K}f(\rho x, \rho^2t).
\]  
Thus, by choosing 
\[
K = \|u\|_{L^\infty(Q_1)} + \varepsilon_1^{-1}\|f\|_{L^p(Q_1)},
\]
we can assume \eqref{eq_scal} without loss of generality, since the coefficients $\tilde{A}_{\alpha, \beta}$ and $\tilde{{\bf b}}_{\alpha,\beta}$ and the source term $\tilde{f}$ satisfy the same assumptions required in Theorem \ref{theorem01}. Similarly, we can assume 
\begin{equation*}
\|f\|_{\bmo(Q_1)} \leq \varepsilon_2,
\end{equation*} 
in the proof of Theorem \ref{theorem02}.
\end{Remark}

\subsection{Definitions and auxiliary results}

In what follows, we recall some definitions and results which will be useful throughout the paper. First, we present the definition of $L^p$-viscosity solution.  

\begin{Definition}[$L^p$-viscosity solution]\label{def Lp-viscosity sol}
Let $f\in L^p_{\textrm{loc}}(Q_1)$. We say that $u\in \mathcal{C}(Q_1)$ is an $L^p$-viscosity subsolution $($resp. supersolution$)$ of 
\begin{equation}\label{Lp-viscosity sol}
u_t+F(x,t,u,Du,D^2u)=f(x,t) \: \: \mbox{in} \: \: Q_1,
\end{equation}
if for $\phi\in  W^{2,p}_{\mathrm{loc}}(Q_1)$, we have
\begin{align*}\label{limSubsolution1}
{\mathrm{ess.}\varliminf}_{(y,s)\to (x,t)} \,\{\phi_t(y,s)+F(y,s,u(y),D\phi(y),D^2\phi (y))-f(y,s)\} \leq 0
\\
(\mbox{resp.},\:{\mathrm{ess.}\varlimsup}_{(y,s)\to (x,t)} \,\{\phi_t(y,s)+F(y,s,u(y),D\phi(y),D^2\phi (y))-f(y,s)\} \geq 0 ) \nonumber
\end{align*}
whenever $u-\phi$ attains a local maximum (resp.\ minimum) at $(x,t) \in Q_1$. We call $u$ is an $L^p$-viscosity solution of \eqref{Lp-viscosity sol}, if  $u$ is an $L^p$-viscosity subsolution and supersolution of \eqref{Lp-viscosity sol}. We say that a $L^p$-viscosity solution $u$ is a normalized $L^p$-viscosity solution if $\sup_{Q_1}|u| \leq 1$.
\end{Definition}

For the sake of completeness, we define the class of viscosity solutions. First, we recall the definition of extremal operators; see \cite{Astesiano} for a first contribution on fully nonlinear parabolic extremal equations.

\begin{Definition}[\it Pucci's extremal operators]\rm
	Let $\mathcal{S}(d)$ the space of $d\times d$ symmetric matrices. For $M \in \mathcal{S}(d)$, we define the Pucci's extremal operators by
	\[
	\mathcal{M}^+_{\lambda,\Lambda}(M)\,:=\,-\lambda\sum_{e_i>0}e_i\,-\,\Lambda\sum_{e_i<0}e_i
	\]
	and
	\[
	\mathcal{M}^-_{\lambda,\Lambda}(M)\,:=\,-\Lambda\sum_{e_i>0}e_i\,-\,\lambda\sum_{e_i<0}e_i,
	\]
	where $(e_i)_{i=1}^d$ are the eigenvalues of $M$.
\end{Definition}

Observe that, if $A\in\mathcal{S}(d)$ is a $(\lambda,\Lambda)$-elliptic matrix, \emph{i.e}, 
\begin{equation}\label{elliptic-pucci}
\lambda |\xi|^2\leq A_{ij}\xi_i\xi_j\leq \Lambda |\xi|^2
\end{equation}
for every $\xi\in\mathbb{R}^d$, it easy to see that we can write the Pucci's extremal operators as
\begin{equation}
	\mathcal{M}^+_{\lambda,\Lambda}(M)=\sup_{\lambda I\leq A\leq \Lambda I}[-\tr(AM)]
\end{equation}
and
\begin{equation}
	\mathcal{M}^-_{\lambda,\Lambda}(M)=\inf_{\lambda I\leq A\leq \Lambda I}[-\tr(AM)];
\end{equation}
we refer to the reader to \cite{cafcab} for more details. See also \cite{cra_ko_swI}, \cite{imbersil}. Therefore, the Pucci's extremal operators are prototype examples of Bellman operators. 

\begin{Definition}[\it The class of viscosity solutions]\rm
	Let $f\in\mathcal{C}(Q_1)$ and $0<\lambda\leq\Lambda.$ We say that $u$ is in the class of supersolutions $\overline{S}(\lambda, \Lambda, f)$ if 
	\[
	u_t+\mathcal{M}^+_{\lambda,\Lambda}(D^2u)\,\geq\, f(x,t) \,\, \mbox{in} \,\, Q_1
	\]
	in the viscosity sense. Similarly, $u$ is in the class of subsolutions $\underline{S}(\lambda, \Lambda, f)$ if
	\[
	u_t+\mathcal{M}^-_{\lambda,\Lambda}(D^2u)\,\leq\, f(x,t) \,\, \mbox{in} \,\, Q_1
	\]
	in the viscosity sense. Finally, the class of $(\lambda, \Lambda)$-viscosity solutions is defined by
	\[
	S(\lambda,\Lambda,f)=\overline{S}(\lambda, \Lambda, f)\cap\underline{S}(\lambda, \Lambda, f).
	\]
\end{Definition}

In what follows we introduce measure notions that we use in the next section. We refer the reader to \cite{caffarelli} for more details.

\begin{Definition}\rm
	Let $L:Q_1\rightarrow\mathbb{R}$ be an affine function and $M$ a positive constant. The paraboloid of opening $M$ is defined by
	\[
	P_{M}(x,t)=L(x,t)\pm M(|x|^2+|t|).
	\]
In addition, we introduce	
	\[
	\underline{G}_{M}(u, Q)\coloneqq\{(x_0,t_0)\in Q: \exists \, P_{M} \; \mbox{that touches } u \mbox{ by bellow at} \; (x_0,t_0)\},
	\] 
	\[
	\overline{G}_{M}(u, Q)\coloneqq\{(x_0,t_0)\in Q: \exists \, P_{M} \; \mbox{that touches } u \mbox{ by above at} \; (x_0,t_0)\},
	\] 
	and
	\[
	G_M(u,Q)\coloneqq \underline{G}_{M}(u, Q)\cap \overline{G}_{M}(u, Q).
	\]
	In addition, denote
	\[
	\underline{A}_M(u,Q)\coloneqq Q\setminus\underline{G}_M(u,Q),\,\,\,\,\,\; \overline{A}_M(u,Q)\coloneqq Q\setminus\overline{G}_M(u,Q)
	\]
	and
	\[
	A_M(u,Q)\coloneqq \underline{A}_{M}(u, Q)\cup \overline{A}_{M}(u, Q).
	\]
\end{Definition}

We close this section with a well-known result from the realm of measure theory. Given $K_1$, a dyadic cube is obtained by repeating a finite numbers of time the following procedure: We split the sides of $K_1$ into two equal intervals in $x$ and four equals one in $t$. We do the same with the $2^{d+2}$ cubes obtained, and we repeat this process. Each cube obtained in this process is called a dyadic cube. We say that $\tilde{K}$ is a predecessor of a cube $K$ if K is one of the $2^{d+2}$ cubes obtained by splitting the sides of $\tilde{K}$.

In addition, given $m \in \mathbb{N}$ and a dyadic cube $K$, the set $\bar{K}^m$ is obtained by staking $m$ copies of its predecessor $\bar{K}$; in other words, if $\bar{K}$ has the form $(a,b)\times L$, then $\bar{K}^m = (b,b + m(b-a))\times L$.


\begin{Lemma}[Stacked covering lemma]\label{lem_cov}
Let $m \in \mathbb{N}$, $A \subset B \subset K_1$ and $0<\rho <1$. Suppose that
\begin{itemize}
\item[(i)] $|A| \leq \rho|K_1|$;
\item[(ii)] If $K$ is dyadic cube of $K_1$ such that $|K \cap A| > \rho|K|$, then $\bar{K}^m\subset B$.
\end{itemize}
Then $|A| \leq \dfrac{\rho(m+1)}{m} |B|$.
\end{Lemma}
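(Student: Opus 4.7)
My plan is to combine a Calderón--Zygmund-type stopping-time argument on $A$, adapted to the parabolic dyadic grid, with a covering estimate that exploits the time-forward ``stacking'' appearing in hypothesis~$(ii)$.

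First, I would form the family $\mathcal{G}$ of dyadic parabolic cubes $\bar K\subset K_1$ which are \emph{good}, $|\bar K\cap A|\leq\rho|\bar K|$, but admit at least one \emph{bad} dyadic child $K$, $|K\cap A|>\rho|K|$. I would then take the maximal elements $\{\bar K_l\}_{l\geq 1}$ of $\mathcal{G}$ under inclusion; since any two dyadic cubes are nested or disjoint, the resulting collection is pairwise disjoint. Lebesgue's differentiation theorem, together with hypothesis~$(i)$, guarantees that for almost every $(x,t)\in A$ there is a largest bad dyadic cube containing $(x,t)$; its predecessor belongs to $\mathcal{G}$ and hence is contained in some $\bar K_l$, so $A\subset\bigcup_l\bar K_l$ modulo a null set. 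Applying hypothesis~$(ii)$ to any bad child of $\bar K_l$ also yields $\bar K_l^m\subset B$ for each $l$.

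Disjointness of the $\bar K_l$'s and their good-density property then give
\[
|A|=\sum_l |\bar K_l\cap A|\leq \rho\sum_l |\bar K_l|,
\]
so the proof reduces to showing $\sum_l|\bar K_l|\leq \frac{m+1}{m}|B|$. Writing $\bar K_l=L_l\times(a_l,b_l)$ and applying Fubini in the time variable, this further reduces to a one-dimensional estimate: for almost every $x$, the subcollection $\{\bar K_l:x\in L_l\}$ has pairwise disjoint temporal intervals $(a_l,b_l)$ (again by the disjointness of the $\bar K_l$'s) whose forward stacks $(b_l,b_l+m(b_l-a_l))$ lie in the time slice $B(x)$, and one needs
\[
\sum_{l:\,x\in L_l}(b_l-a_l)\leq \frac{m+1}{m}\Big|\bigcup_{l:\,x\in L_l}(b_l,b_l+m(b_l-a_l))\Big|.
\]

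The main obstacle is this 1D measure estimate. I would establish it by a recursive analysis on the connected components of $I\cup F$, where $I=\bigcup_l(a_l,b_l)$ and $F=\bigcup_l(b_l,b_l+m(b_l-a_l))$: within each component one isolates the leftmost interval $I_{l_1}$, and if $I_{l_1}\cup F_{l_1}$ covers the whole component, the remaining $I_l$'s in the component lie inside $F_{l_1}$ and have total length at most $|F_{l_1}|=m(b_{l_1}-a_{l_1})$, so the sum of $(b_l-a_l)$ in the component is at most $(m+1)(b_{l_1}-a_{l_1})=\frac{m+1}{m}|F_{l_1}|$; the complementary case reduces to this one by splitting the component and inducting on the number of intervals. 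The extremal configuration is a nested cascade in which successively smaller intervals sit inside the forward stacks of their predecessors and all forward stacks end at a common point, saturating the constant $(m+1)/m$. Combining the 1D estimate through Fubini with the density bound then yields $|A|\leq\frac{\rho(m+1)}{m}|B|$, completing the proof.
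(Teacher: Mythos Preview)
The paper does not supply its own proof of this lemma; immediately after the statement it writes ``For a proof of Lemma~\ref{lem_cov} we refer \cite[Lemma 4.27]{imbersil}; see also \cite{cafcab}.'' Your proposal is a faithful outline of the argument in that reference: a Calder\'on--Zygmund stopping time to extract a disjoint family of maximal good predecessors $\{\bar K_l\}$ covering $A$ up to a null set, the density bound $|A|\le\rho\sum_l|\bar K_l|$, Fubini in the time variable to reduce $\sum_l|\bar K_l|\le\frac{m+1}{m}|B|$ to a one-dimensional covering inequality, and an induction on the number of intervals for the latter.

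The only place your sketch is thin is the ``complementary case'' of the 1D step, where you write that one ``splits the component and inducts.'' The clean way to close the induction is this: if $I_{l_1}$ is leftmost then every other $I_l$ has $a_l\ge b_{l_1}$; partition $\{l\neq l_1\}$ into those $l$ with $I_l\subset F_{l_1}$ (their total length is $\le |F_{l_1}|=m|I_{l_1}|$ by disjointness) and those with $I_l\not\subset F_{l_1}$. For the second group one checks $b_l>b_{l_1}+m|I_{l_1}|$, hence their stacks $F_l$ are \emph{disjoint from} $F_{l_1}$; applying the inductive hypothesis to this group and summing gives
\[
\sum_l|I_l|\le (m+1)|I_{l_1}|+\tfrac{m+1}{m}\Big|\bigcup_{l:\,I_l\not\subset F_{l_1}}F_l\Big|=\tfrac{m+1}{m}\Big|F_{l_1}\cup\bigcup_{l:\,I_l\not\subset F_{l_1}}F_l\Big|\le\tfrac{m+1}{m}\Big|\bigcup_l F_l\Big|,
\]
the penultimate equality using precisely that disjointness. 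Once this is made explicit your argument is complete and matches the standard proof.
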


For a proof of Lemma \ref{lem_cov} we refer \cite[Lemma 4.27]{imbersil}; see also \cite{cafcab}. The next section is devoted to prove the Theorem \ref{theorem01}.

\section{Estimates in Sobolev spaces}

Throughout this section, we detail the proof of Theorem \ref{theorem01}, namely, the $W^{2,1;p}$-estimates to equation \eqref{equation03}. First, we establish the same estimate for \eqref{eq_main}, \emph{i.e.}, the PDE with no dependence on the gradient. 

\begin{Proposition}\label{prop_sob}
Let $d +1<p$ and $u \in {\cal C}(Q_1)$ be a normalized $L^p$-viscosity solution to \eqref{eq_main}. Assume A\ref{assumption1}-A\ref{assumption3} hold true. Then, $u \in W_{loc}^{2,1;p}(Q_1)$. Moreover, there exists a universal constant $C>0$ such that 
\[
\|u\|_{{ W^{2,1;p}}(Q_{1/2})} \leq C \left( \|u\|_{L^{\infty}(Q_1)} + 
\|f\|_{L^{p}(Q_1)}\right).
\]	
\end{Proposition}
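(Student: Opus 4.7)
The plan is to follow the Caffarelli-type measure-theoretic approximation strategy, parabolic version, along the lines of \cite{caspim} and the elliptic argument of \cite{Pimentel}. Throughout I work with the normalizing assumptions $\|u\|_{L^\infty(Q_1)} \leq 1$ and $\|f\|_{L^p(Q_1)} \leq \varepsilon_1$, which may be arranged via the scaling indicated in the preceding Remark. The task reduces to controlling the distribution function of the minimal opening of touching paraboloids to $u$.

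First I would prove an \emph{approximation lemma}: for every $\delta>0$ there exists $\varepsilon_1>0$ (depending on $d,\lambda,\Lambda,p,\delta$) such that, under A\ref{assumption1}-A\ref{assumption3} and the smallness of $f$, any normalized $L^p$-viscosity solution $u$ of \eqref{eq_main} in $Q_{8/9}$ admits a function $h\in\mathcal{C}^{1,1}(Q_{3/4})$ with $\|u-h\|_{L^\infty(Q_{3/4})}\leq \delta$ and a universal bound $\|h\|_{\mathcal{C}^{1,1}(Q_{3/4})}\leq C_0$. The argument is by contradiction, using the stability of $L^p$-viscosity solutions (passing to the limit in sequences with $\varepsilon_1\to 0$), the closeness of $A_{\alpha,\beta}$ to $\bar A_\beta$ from A\ref{assumption3}, and the $\mathcal{C}^{1,1}$-regularity of the Bellman limit granted by A\ref{assumption3} (Assumption on Bellman). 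The key point is that the limit solves $v_t+\inf_\beta[-\tr(\bar A_\beta D^2 v)]=0$.

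Next I would use this approximation to establish a \emph{measure decay at one scale}: there exist universal $M_0>1$ and $\sigma\in(0,1)$ such that if $u$ is as above and $G_{M_0}(u,Q_{3/4})\cap K_1^{\,\ast}\neq\emptyset$ for a suitable reference subcube, then
\[
|A_{M_0}(u,Q_{3/4})\cap K_1|\leq \sigma |K_1|.
\]
This follows by observing that $h$ has paraboloids of universal opening $C_0$ touching it from above and below at every point, and then transferring this property to $u$ via the $\delta$-closeness, with $\delta$ chosen small relative to $C_0$. Cube decomposition on the parabolic scale and a rescaling (preserving A\ref{assumption1}-A\ref{assumption3}) then upgrade this to: for every dyadic subcube $K\subset K_1$ satisfying a non-emptiness condition for $G_{M_0^k}$ on its predecessor, the density of $A_{M_0^{k+1}}(u,\cdot)\cap K$ is at most $\sigma$.

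Iterating this with the stacked covering Lemma \ref{lem_cov} yields the geometric decay
\[
|A_{M_0^k}(u,Q_{1/2})|\leq C\,\sigma^k\qquad (k\in\mathbb{N}),
\]
which in turn gives $|\{\Theta(u)>t\}\cap Q_{1/2}|\leq C t^{-p_0}$ for some $p_0>0$ controlled by $\log\sigma/\log M_0$. The smallness of $\varepsilon_1$ can be tightened so that $\sigma$ is sufficiently small to make $p_0$ exceed any prescribed $p>d+1$. A by now standard parabolic argument converting distributional control of the paraboloid-opening function $\Theta(u)$ into $L^p$-control of $u_t$ and $D^2 u$ (as in \cite{caspim}, \cite{wangII}, \cite{imbersil}) then delivers the desired $W^{2,1;p}$ estimate on $Q_{1/2}$.

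The main obstacle I expect is the approximation lemma itself, in particular verifying that the smallness of $|A_{\alpha,\beta}-\bar A_\beta|$ from A\ref{assumption3} together with the $L^p$ smallness of $f$ are enough to pass to the Bellman limit while keeping $\mathcal{C}^{1,1}$-control; one must be careful because the limit equation is only $\inf_\beta$ (the $\sup_\alpha$ disappears due to uniform closeness) and one needs $L^p$-viscosity stability rather than the easier $\mathcal{C}$-viscosity stability. The rest is standard parabolic Caffarelli machinery once this step is in place.
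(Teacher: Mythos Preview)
Your overall architecture matches the paper's: approximation lemma by compactness (their Proposition~\ref{approximationlemma}), one-scale measure decay (their Propositions~\ref{Prop 1}--\ref{Prop 2}), and iteration via the stacked covering Lemma~\ref{lem_cov} (their Proposition~\ref{Prop 4}). However, there is a genuine gap in your iteration step.

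You claim the rescaling to a dyadic subcube ``preserves A\ref{assumption1}--A\ref{assumption3}'' and then deduce a \emph{pure} geometric decay $|A_{M_0^k}(u,Q_{1/2})|\leq C\sigma^k$. The structural assumptions A\ref{assumption1}, A\ref{assumptionsobolev}, A\ref{assumption3} are indeed scale-invariant, but the smallness $\|f\|_{L^p}\leq\varepsilon_1$ needed to invoke the approximation lemma is \emph{not}. When you zoom into a dyadic cube $K$ of side $2^{-i}$ and rescale, the new source term $\tilde f(x,t)=M^{-k}f(2^{-i}x,2^{-2i}t)$ satisfies
\[
\|\tilde f\|_{L^{d+1}(Q_{8\sqrt d})}^{d+1}\;\sim\;\frac{2^{i(d+2)}}{M^{k(d+1)}}\int_{K}|f|^{d+1},
\]
which is small only on cubes where the parabolic maximal function $m(f^{d+1})$ is bounded by $(c_1M^k)^{d+1}$. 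On cubes where $f$ concentrates this fails, so the approximation lemma does not apply there and you cannot extract the density bound. The paper (following Caffarelli's original scheme) handles this by enlarging the ``bad'' set in the covering argument to
\[
B=\bigl(A_{M^k}(u,\cdot)\cap K_1\bigr)\cup\bigl\{(x,t)\in K_1:\ m(f^{d+1})(x,t)\geq (c_1M^k)^{d+1}\bigr\},
\]
so that the decay reads $\alpha_{k+1}\leq\rho(\alpha_k+\beta_k)$ with $\beta_k$ the measure of the maximal-function superlevel set. The $L^p$-integrability of $f$ then gives $\sum_k M^{pk}\beta_k\leq C$, and only after summing the convolution $\alpha_k\leq\rho^k+\sum_{i<k}\rho^{k-i}\beta_i$ do you obtain $\sum_k M^{pk}\alpha_k<\infty$. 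Without this maximal-function correction your pure decay $C\sigma^k$ is simply false for nonzero $f$, and tightening $\varepsilon_1$ globally does not help because the obstruction is local.

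A secondary point: your transfer of touching paraboloids from $h$ to $u$ ``via the $\delta$-closeness'' is too vague. The paper instead applies the a~priori $W^{2,1;\delta}$ estimate (Lemma~\ref{Lemma01}) to $w=\frac{\delta}{2C\varepsilon}(u-h)\in S(\lambda,\Lambda,f)$, which is the clean way to get measure control of $A_M(u-h,\cdot)$.
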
 

We use standard arguments to prove Proposition \ref{prop_sob}, see for instance \cite{cafcab} and \cite{caspim}, just to cite a few. On account of completeness, we present the main steps of the proof. We start with the following lemma.

\begin{Lemma}[A priori regularity in $W_{loc}^{2, 1; \delta}(Q_1)$] \label{Lemma01}
Let $u\in{\cal C}(Q_1)$ be a normalized viscosity solution to (\ref{eq_main}). Assume A\ref{assumption1}-A\ref{assumptionsourceterm} are in force. Then, there exist some $\delta>0$ and a universal constant $C>0$ satisfying
\[ 
| A_M(u, Q_1)\cap K_1| \leq C M^{-\delta}.
\]
\end{Lemma}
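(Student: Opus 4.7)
The plan is to follow the classical parabolic Caffarelli--Escauriaza scheme: produce a single-scale measure estimate on $G_{M_0}$ for a universal $M_0>1$, and then propagate it to arbitrary scales via the stacked covering lemma (Lemma~\ref{lem_cov}), obtaining the power-type decay $CM^{-\delta}$. Since neither A\ref{assumptionsobolev} nor the Bellman structure is assumed here, the argument rests solely on the extremal inequalities satisfied by $u$.

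The first step is to place $u$ in the Pucci class $S(\lambda,\Lambda,f)$. Because $A_{\alpha,\beta}$ is $(\lambda,\Lambda)$-elliptic uniformly in $(\alpha,\beta)$ by A\ref{assumption1}, for every symmetric matrix $M$ one has $\mathcal{M}^-_{\lambda,\Lambda}(M)\le -\tr(A_{\alpha,\beta}(x,t)M)\le\mathcal{M}^+_{\lambda,\Lambda}(M)$; taking $\sup_\alpha\inf_\beta$ and comparing with \eqref{eq_main} yields, in the $L^p$-viscosity sense,
\[
u_t+\mathcal{M}^-_{\lambda,\Lambda}(D^2u)\le f\le u_t+\mathcal{M}^+_{\lambda,\Lambda}(D^2u).
\]

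The second step is the base measure estimate: after the usual normalization $\|u\|_{L^\infty(Q_1)}\le 1$ and $\|f\|_{L^p(Q_1)}\le\varepsilon_0$ (achievable by scaling, absorbing the loss into the final constant $C$), there exist universal $M_0>1$ and $\rho\in(0,1)$ such that
\[
|G_{M_0}(u,Q_1)\cap K_1|\ge(1-\rho)|K_1|.
\]
This is produced by subtracting from $u$ a universal barrier which is nonnegative on $\partial_p Q_2$ and strictly negative on $K_1$, and applying the parabolic ABP--Krylov--Tso estimate to the resulting function. The hypothesis $p>d+1$ (A\ref{assumptionsourceterm}) is precisely the integrability required to control the ABP term in terms of $\|f\|_{L^p}$.

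The third step is the iteration. For a dyadic subcube $K\subset K_1$ with $|K\cap A_{M_0^{k+1}}(u,Q_1)|>\rho|K|$, one rescales \eqref{eq_main} to $K$ using the parabolic scaling $v(x,t)=u(r x,r^2 t)/K_r$, notes that the ellipticity constants are unchanged and the rescaled source satisfies $\|\tilde f\|_{L^p(Q_1)}=r^{2-(d+2)/p}\|f\|_{L^p(Q_r)}$, and applies the base step to $v$. This shows that the stacked predecessor $\overline{K}^m$ (with $m$ a universal integer accommodating the parabolic scaling of paraboloid openings) must be contained in $A_{M_0^k}(u,Q_1)$. Applying Lemma~\ref{lem_cov} inductively produces
\[
|A_{M_0^k}(u,Q_1)\cap K_1|\le\left(\tfrac{\rho(m+1)}{m}\right)^{k}|K_1|,
\]
and interpolating between consecutive powers of $M_0$ yields the desired $|A_M(u,Q_1)\cap K_1|\le CM^{-\delta}$ with $\delta=-\log(\rho(m+1)/m)/\log M_0$, provided $m$ is chosen so that $\rho(m+1)/m<1$.

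The main obstacle is the scale invariance required in the iteration: one must verify that the smallness of $\|f\|_{L^p}$ is preserved under the parabolic rescaling. This is exactly why $p>d+1$ is imposed in A\ref{assumptionsourceterm}---the scaling exponent $2-(d+2)/p$ is then strictly positive (indeed $p>(d+2)/2$ would already suffice at this point), so smallness improves at every dyadic scale and the induction closes. The remaining verification consists of routine modifications of the stationary arguments in \cite{cafcab} and \cite{caspim}, together with the parabolic technicalities already present in \cite{imbersil} and \cite{wangI}.
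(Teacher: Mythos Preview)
The paper does not actually prove Lemma~\ref{Lemma01}: it simply records it as a known result, citing \cite[Proposition~7.4]{cafcab} for the elliptic case and \cite[Theorem~4.11]{wangI} for the parabolic one. Your sketch is precisely the standard argument that those references carry out---reduction to the Pucci class $S(\lambda,\Lambda,f)$, a base-scale density estimate via ABP--Krylov--Tso and a barrier, and then propagation through scales using the stacked covering lemma---so in substance you are reproducing the proof the paper defers to.

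Two minor remarks on your write-up. First, the integrability $p>d+1$ is more than this lemma needs: the $W^{2,1;\delta}$ estimate is a universal statement requiring only control of $\|f\|_{L^{d+1}}$ (the natural ABP exponent), and the $\delta$ it produces depends only on $d,\lambda,\Lambda$; the stronger hypothesis $p>d+1$ is used later in the paper for the full $W^{2,1;p}$ result. Second, in your iteration step the contrapositive should be phrased as: if $\bar K^{m}\cap G_{M_0^{k}}\neq\emptyset$ then $|K\cap A_{M_0^{k+1}}|\le\rho|K|$ (cf.\ Proposition~\ref{Prop 2} of the paper, which plays exactly this role one level up in the $W^{2,1;p}$ argument). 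Neither point affects the correctness of your outline.
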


The Lemma \ref{Lemma01} is a well-known result; we refer the reader to \cite[Proposition 7.4]{cafcab} for the elliptic setting. For the parabolic context, it follows from \cite[Theorem 4.11]{wangI}. Next, we prove an approximation lemma that relates solutions to \eqref{eq_main} with solutions of the Bellman parabolic model.

\begin{Proposition}[First approximation lemma]\label{approximationlemma}
Let $u\in\mathcal{C}(Q_1)$ be a normalized $L^p$-viscosity solution to \eqref{eq_main}. 
Suppose that A\ref{assumption1}-A\ref{assumption3} hold true. Then, given $\delta>0$, there exists $\varepsilon_1>0$, such that, if 
\[
\|f \|_{L^{p}(Q_1)} \leq \varepsilon_1, 
\]
then there exists $h \in {\mathcal C}^{1,1}(Q_{3/4})$ satisfying
$$ \| u - h\|_{L^{\infty}(Q_{3/4})} \leq \delta.$$
\end{Proposition}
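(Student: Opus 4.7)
I would argue by contradiction and compactness. Suppose the conclusion fails. Then there exist $\delta_0>0$ and sequences $\{u_n\}\subset\mathcal{C}(Q_1)$, $\{f_n\}\subset L^p(Q_1)$, and matrices $A^n_{\alpha,\beta}$, $\bar A^n_\beta$ satisfying A\ref{assumption1}--A\ref{assumption3}, such that $\|u_n\|_{L^\infty(Q_1)}\leq 1$, $\|f_n\|_{L^p(Q_1)}\leq 1/n$, each $u_n$ is an $L^p$-viscosity solution of the corresponding Isaacs equation, yet $\|u_n-h\|_{L^\infty(Q_{3/4})}>\delta_0$ for every $h\in\mathcal{C}^{1,1}(Q_{3/4})$.

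The first step is to extract a uniformly convergent subsequence. Since $u_n\in S(\lambda,\Lambda,f_n)$ and $\|f_n\|_{L^p(Q_1)}\to 0$ with $p>d+1$, parabolic Krylov--Safonov-type estimates for $L^p$-viscosity sub/supersolutions (available in the references cited in the preliminaries, e.g.\ \cite{imbersil, cra_ko_swI}) yield a uniform $\mathcal{C}^{0,\alpha}$ bound on compact subsets, in particular on $Q_{8/9}$. By Arzel\`a--Ascoli, a subsequence (still denoted $u_n$) converges uniformly on $\overline{Q_{8/9}}$ to some $u_\infty\in\mathcal{C}(\overline{Q_{8/9}})$. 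Simultaneously, the $(\lambda,\Lambda)$-elliptic matrices $\bar A^n_\beta$ lie in a compact set for each fixed $\beta$; after a diagonal extraction we may assume $\bar A^n_\beta(x,t)\to \bar A^\infty_\beta(x,t)$ pointwise (indeed, with the appropriate stability framework, in the sense required to pass to the limit).

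Next, I would show that $u_\infty$ is an $L^p$-viscosity solution of the limit Bellman equation
\[
(u_\infty)_t+\inf_{\beta\in\mathcal{B}}\bigl[-\tr(\bar A^\infty_\beta(x,t)D^2u_\infty)\bigr]=0\quad\text{in }Q_{8/9}.
\]
The key observation is that by A\ref{assumptionsobolev} each $u_n$ solves an equation whose operator differs from the Bellman operator with coefficients $\bar A^n_\beta$ by a quantity controlled in $L^\infty$ by $\Lambda|A^n_{\alpha,\beta}-\bar A^n_\beta|\,|D^2\varphi|\leq \varepsilon_1^n\|D^2\varphi\|_\infty$ for test functions $\varphi\in W^{2,p}_{\mathrm{loc}}$, so the associated source terms tend to zero. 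Combined with the vanishing of $f_n$ and the standard stability theorem for $L^p$-viscosity solutions under uniform convergence of solutions and $L^p$-convergence of right-hand sides (see \cite{cra_ko_swI}), one concludes that $u_\infty$ solves the homogeneous Bellman equation above. By A\ref{assumption3}, $u_\infty\in\mathcal{C}^{1,1}(Q_{3/4})$ with $\|u_\infty\|_{\mathcal{C}^{1,1}(Q_{3/4})}\leq C\|u_\infty\|_{L^\infty(Q_{8/9})}\leq C$. Taking $h:=u_\infty$ yields $\|u_n-h\|_{L^\infty(Q_{3/4})}\to 0$, contradicting $\|u_n-h\|_{L^\infty(Q_{3/4})}>\delta_0$.

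The main obstacle will be justifying the passage to the limit in the viscosity sense, because the operators vary with $n$ and the Isaacs structure (sup-inf over two parameters) must be reconciled with the Bellman structure (inf over one). The cleanest way to handle this is to write, for any fixed $\alpha\in\mathcal{A}$,
\[
\inf_{\beta}\bigl[-\tr(A^n_{\alpha,\beta}D^2\varphi)\bigr]\leq \inf_{\beta}\bigl[-\tr(\bar A^n_\beta D^2\varphi)\bigr]+\varepsilon_1^n\Lambda\|D^2\varphi\|_\infty,
\]
and the analogous lower bound, so that the Isaacs operator is sandwiched between the Bellman operator plus vanishing errors. This allows the stability machinery for $L^p$-viscosity solutions to apply; choosing $\varepsilon_1$ once $\delta$ is fixed comes out of this contradiction argument without an explicit quantitative estimate.
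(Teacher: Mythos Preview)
Your contradiction--compactness--stability strategy is exactly the one the paper uses, and the core steps (uniform $\mathcal{C}^{0,\gamma}$ bounds from Krylov--Safonov, stability of $L^p$-viscosity solutions, then invoking A\ref{assumption3} for the limit) match. One simplification in the paper's version that you should adopt: the target matrix $\bar A_\beta$ is \emph{fixed} by the hypotheses A\ref{assumptionsobolev}--A\ref{assumption3}, so in the contradiction only $A^n_{\alpha,\beta}$, $f_n$, $u_n$ vary, with $|A^n_{\alpha,\beta}-\bar A_\beta|+\|f_n\|_{L^p}\leq 1/n$. This removes your diagonal extraction of $\bar A^n_\beta\to\bar A^\infty_\beta$ and, more importantly, the question of whether A\ref{assumption3} applies to the limiting coefficients $\bar A^\infty_\beta$ --- as written, A\ref{assumption3} is a hypothesis on the given $\bar A_\beta$, not on arbitrary limits of such matrices, so your appeal to it for $\bar A^\infty_\beta$ is not justified without further argument. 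With $\bar A_\beta$ fixed, the limit equation is directly the Bellman problem in A\ref{assumption3} and the $\mathcal{C}^{1,1}$ conclusion is immediate.
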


\begin{proof}
Suppose the statement of the proposition is false. Then, there exists a $\delta_0 > 0$ such that
$$\|u-h\|_{L^{\infty}(Q_{3/4})} > \delta_0,$$
for every $h \in {\mathcal C}^{1,1}(Q_{3/4})$. Consider the sequences $(A_{\alpha, \beta}^n)_{n\in\mathbb{N}}, (f_n)_{n\in\mathbb{N}}$ and $(u_n)_{n\in\mathbb{N}}$ such that
$$|A^n_{\alpha, \beta}(x,t) - \bar{A}_{\beta}(x,t)| + \|f_n\|_{L^p(Q_{3/4})} \leq 1/n,$$
and $u_n$ solves
\begin{equation}\label{eq02}
(u_n)_t +  \sup_{\alpha \in {\cal A}} \inf_{\beta \in {\cal B}}
[ - {\tr}(A^n_{\alpha, \beta}(x, t) D^{2} u_n(x, t))] = f_n(x, t) \quad \text{in} \quad Q_1.
\end{equation}
The regularity theory available for (\ref{eq02}) implies that, through a subsequence if necessary, $u_n$ converges to a function $u_{\infty}$ in the ${\mathcal C}^{0,\gamma}$-topology; see \cite{imbersil}, \cite{KrySaf}. Now, by standard stability results of viscosity solutions, we have that
\[
(u_{\infty})_t + \inf_{\beta \in {\cal B}} [ - {\tr}(\bar{A}_{\beta}(x, t) D^{2} u_{\infty})] = 0;
\]
see \cite{cra_ko_swI}, \cite{imbersil}.
From assumption A\ref{assumption3} we have $u_{\infty} \in {\mathcal C}^{1,1}(Q_{3/4})$. Finally, taking $h=u_{\infty}$ we obtain a contradiction. This finishes the proof.
\end{proof}

Now, we are able to establish a first level of improved decay rate. In the sequel, $Q$ is a parabolic domain such that $Q_{8\sqrt{d}} \subset Q$.

\begin{Proposition} \label{Prop 1}
Let $0<\rho<1$ and $u\in\mathcal{C}(Q)$ be a normalized $L^p$-viscosity solution to \eqref{eq_main} in $ Q_{8{\sqrt{d}}}$ satisfying
\[
- | x |^2 - | t | \leq u(x,t) \leq |x |^2 + | t | \quad \mbox{in} \quad Q \setminus Q_{6{\sqrt{d}}}.
\]
Assume that A\ref{assumption1}-A\ref{assumption3} are satisfied and also 
\[
\| f \|_{L^{d+1}(Q_{8{\sqrt{d}}})} \leq \varepsilon.
\]
Then, there exists $\bar{M}>1$ such that  
\[
| G_{\bar{M}}(u, Q) \cap K_1| \geq 1 - \rho. 
\]
\end{Proposition}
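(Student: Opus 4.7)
The plan is to compare $u$ with a $\mathcal{C}^{1,1}$ approximation $h$ coming from the first approximation lemma, and then to transfer the ``touched by a paraboloid of universal opening'' property from $h$ to $u$. The measure-theoretic control of the bad set is provided by the $W^{2,1;\delta}$ estimate of Lemma \ref{Lemma01} applied to the difference $v:=u-h$.

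First, I would rescale the first approximation lemma (Proposition \ref{approximationlemma}) from $Q_1$ to $Q_{8\sqrt{d}}$ to obtain, for any prescribed $\delta_*>0$, a function $h\in\mathcal{C}^{1,1}(Q_{6\sqrt{d}})$ with $\|u-h\|_{L^\infty(Q_{6\sqrt{d}})}\leq \delta_*$; the smallness $\|f\|_{L^{d+1}(Q_{8\sqrt d})}\leq\varepsilon$ unlocks this for small enough $\varepsilon$. By Assumption A\ref{assumption3}, $\|h\|_{\mathcal{C}^{1,1}(Q_{5\sqrt d})}\leq N_0$ for a universal constant $N_0$, so $K_1\subseteq G_{N_0}(h,Q_{6\sqrt d})$: every point of $K_1$ is tangentially touched, from above and from below, by a paraboloid of opening $N_0$.

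Next, I would set $v:=u-h$ on $Q_{6\sqrt d}$. Since the Isaacs operator is $(\lambda,\Lambda)$-elliptic, $u$ belongs to the extremal class $S(\lambda,\Lambda,f)$; and since $h\in\mathcal{C}^{1,1}$ solves the Bellman equation $h_t+\inf_\beta[-\tr(\bar A_\beta D^2 h)]=0$, a standard subtraction lemma places $v$ in the extremal class $S(\lambda,\Lambda,g)$ for some source $g$ with $\|g\|_{L^{d+1}}\leq \|f\|_{L^{d+1}}+C\|h\|_{\mathcal{C}^{1,1}}\leq C$ universal. Moreover $\|v\|_{L^\infty}\leq\delta_*\leq 1$, so $v$ is normalized. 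Applying Lemma \ref{Lemma01} (suitably rescaled) to $v$ yields the measure decay $|A_M(v,Q)\cap K_1|\leq CM^{-\delta_0}$ for universal $C,\delta_0>0$.

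Finally, I would combine the two pieces. Paraboloids add directly: if a paraboloid $P_h$ of opening $N_0$ touches $h$ from above (resp.\ below) at $(x_0,t_0)$ and a paraboloid $P_v$ of opening $M$ touches $v$ from the same side at the same point, then $P_h+P_v$ is a paraboloid of opening $N_0+M$ touching $u=h+v$ from that side at $(x_0,t_0)$ on $Q_{6\sqrt d}$. Outside $Q_{6\sqrt d}$, the global barrier $|u|\leq|x|^2+|t|$ together with $N_0+M$ beyond a universal threshold ensures that the sum paraboloid still dominates (resp.\ is dominated by) $u$ across all of $Q$. Therefore
\[
G_{N_0+M}(u,Q)\cap K_1 \,\supseteq\, K_1\cap G_M(v,Q),
\]
whence $|A_{N_0+M}(u,Q)\cap K_1|\leq CM^{-\delta_0}$. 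Choosing $M$ so that $CM^{-\delta_0}\leq\rho$ and setting $\bar M:=N_0+M$ concludes the proof.

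The main obstacle I anticipate is the rigorous justification that $v=u-h$ lies in the extremal class $S(\lambda,\Lambda,g)$ with universally bounded source, because $u$ solves an Isaacs equation (non-convex in $D^2u$) and $h$ solves a Bellman equation (convex in $D^2h$). This requires the viscosity-theoretic ``subtraction of a $\mathcal{C}^{1,1}$ function'' argument combined with the superadditivity inequality $\mathcal{M}^-(A+B)\geq \mathcal{M}^-(A)+\mathcal{M}^-(B)$. A secondary, minor subtlety is verifying that the summed paraboloid dominates $u$ on $Q\setminus Q_{6\sqrt d}$ via the quadratic growth bound; this only forces $\bar M$ to exceed a universal threshold depending on $d$, which is automatic for the $\bar M$ eventually chosen.
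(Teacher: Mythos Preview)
Your proposal is correct and follows the same Caffarelli--Wang strategy as the paper: approximate $u$ by a $\mathcal{C}^{1,1}$ function $h$ via the first approximation lemma, use the $\mathcal{C}^{1,1}$ bound to place $K_1\subset G_{N}(h,Q)$, apply the $W^{2,1;\delta}$ estimate (Lemma~\ref{Lemma01}) to the difference, and combine.

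There are two differences worth flagging. First, the paper extends $h$ continuously to all of $Q$ by setting $h=u$ outside $Q_{7\sqrt d}$; this makes $u-h$ globally defined with the quadratic barrier inherited automatically, and avoids your separate argument on $Q\setminus Q_{6\sqrt d}$. Second, and more substantively, the paper rescales the difference as $w=\tfrac{\delta}{2C\varepsilon}(u-h)$ before invoking Lemma~\ref{Lemma01}, which produces a decay bound carrying an extra factor of $\varepsilon^{\sigma}$; this lets the paper fix $\bar M=2N$ \emph{universally} and absorb the $\rho$-dependence entirely into the choice of $\varepsilon$. In your version, by contrast, you apply Lemma~\ref{Lemma01} directly to $v=u-h$ with a merely bounded (not small) source and then take $M$ large, so that your $\bar M=N_0+M$ grows like $(C/\rho)^{1/\delta_0}$. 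Both arguments prove the proposition as stated, but the universality of $\bar M$ is what makes the downstream iteration work: in the proof of Proposition~\ref{prop_sob} one needs to choose $\rho$ so that $\rho M^{p}\le 1/2$, and with your $\rho$-dependent $\bar M$ that choice would be impossible. So the paper's rescaling is not cosmetic; if you carry your version forward you will need to reinstate some form of it.

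Your stated ``main obstacle'' --- that $v=u-h$ lies in $S(\lambda,\Lambda,g)$ --- is indeed routine once $h\in\mathcal{C}^{1,1}$: since $u\in S(\lambda,\Lambda,f)$ and $h_t,D^2h$ are bounded a.e., the Pucci sub/superadditivity inequalities give $v\in S(\lambda,\Lambda,f\pm C\|h\|_{\mathcal{C}^{1,1}})$. The paper is in fact more cavalier about this point than you are.
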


\begin{proof}
From Proposition \ref{approximationlemma}, there exists $h\in {\mathcal C}^{1,1}_{loc}(Q_{8\sqrt{d}})$ such that
	\[
		\|u-h\|_{L^{\infty}(Q_{6{\sqrt{d}}})}\leq\delta.
	\]
Extend $h$ continuously to $Q$ such that 
\[ 
h = u \quad  \text{in} \quad Q \setminus Q_{7{\sqrt{d}}}$$ and $$ \| u - h\|_{L^{\infty} (Q)} = \| u - h\|_{L^{\infty} (Q_{6{\sqrt{d}}})}.
\]
By the maximum principle we obtain 
\[
\| u \|_{L^{\infty} (Q_{6{\sqrt{d}}})} = \|  h\|_{L^{\infty} (Q_{6{\sqrt{d}}})}.
\]
It follows that
\[
\| u - h\|_{L^{\infty} (Q)} \leq 2 
\]
and
\[
-2 - | x |^2 - | t | \leq h(x,t) \leq 2  + |x |^2 + | t | \quad  \text{in} \quad Q \setminus Q_{6{\sqrt{d}}}.
\]
Hence, we can find $N>1$ such that $Q_1 \subset G_{N}(h, Q)$.

Now, we introduce the auxiliary function
\[
 w := \frac{\delta}{2C \varepsilon}(u - h).
 \]
According to Lemma \ref{Lemma01} applied to $ w \in S(\lambda, \Lambda, f)$ we have
\[
|A_{M_1}(w, Q)\cap K_1 | \leq C M_1^{-\sigma},
\]
for every $M_1>0$, which leads to
\[
|A_{M_2}(u - h , Q)\cap K_1| \leq C {\varepsilon}^{\sigma}M_2^{\sigma} 
\]
for every $M_2>0$.
Therefore
\[
|G_{N}(u - h, Q) \cap K_1| \geq 1 - C{\varepsilon}^{\sigma}M_2.
\]
By choosing $\varepsilon \ll 1$ sufficiently small, and taking $\bar{M} \equiv 2N$, we conclude the proof.
\end{proof}

\begin{Proposition} \label{Prop 2}
Let $0<\rho<1$ and $u\in\mathcal{C}(Q)$ be a normalized $L^p$-viscosity solution to
\eqref{eq_main} in $Q_{8\sqrt{d}}$. Assume A\ref{assumption1}-A\ref{assumption3} are in force. In addition, suppose  
\[
\|f\|_{L^{d+1}(Q_{8\sqrt{d}})} \leq \varepsilon,
\]
and $G_1(u,Q) \cap K_3 \not= \emptyset.$ Then 
\[
|G_M(u, Q) \cap K_1| \geq 1 - \rho,
\]
with $M$ as in Proposition \ref{Prop 1}. 
\end{Proposition}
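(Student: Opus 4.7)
The plan is to reduce Proposition \ref{Prop 2} to Proposition \ref{Prop 1} through an affine correction followed by a universal rescaling. Pick any $(x_1, t_1) \in G_1(u, Q) \cap K_3$. By definition of $G_1$, there exist affine functions $L_1, L_2 : \mathbb{R}^{d+1} \to \mathbb{R}$ such that
\[
L_1(x,t)-(|x|^2+|t|) \leq u(x,t) \leq L_2(x,t)+(|x|^2+|t|) \qquad \text{in } Q,
\]
with equality at $(x_1, t_1)$. The first task is to show that $L_1, L_2$ are universally bounded on $Q_{8\sqrt{d}}$. Using the normalization of $u$ together with the fact that $(x_1,t_1) \in K_3$ lies well inside $Q_{8\sqrt{d}}$, the matching condition at $(x_1,t_1)$ yields $|L_i(x_1,t_1)| \leq C$; combining this with the one-sided estimates $L_1 \leq u+(|x|^2+|t|)$ and $L_2 \geq u-(|x|^2+|t|)$ on $Q_{8\sqrt{d}}$, an elementary affine-geometry argument (varying $x$ over $B_{8\sqrt{d}}$ and $t$ over $(-64d, 0]$) forces the slopes of $L_i$ to be universal, giving $\|L_i\|_{L^\infty(Q_{8\sqrt{d}})}\leq C^*$ for $i=1,2$.

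Next, I introduce the rescaled function
\[
v(x,t) := \frac{u(x,t)-L_2(x,t)}{K},
\]
with $K \geq 1$ a universal constant fixed below. Because the Isaacs operator is positively $1$-homogeneous in the Hessian and invariant under addition of spatial affine functions, $v$ is an $L^p$-viscosity solution of the same Isaacs equation in $Q_{8\sqrt{d}}$ with source $\tilde f := (f-\partial_t L_2)/K$. Since $\partial_t L_2$ is a universally bounded constant, taking the hypothesis' $\varepsilon$ sufficiently small at the outset guarantees that $\|\tilde f\|_{L^{d+1}(Q_{8\sqrt{d}})}$ falls below the threshold required by Proposition \ref{Prop 1}.

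To verify the paraboloid bounds on $v$ in $Q\setminus Q_{6\sqrt{d}}$, the upper inequality $v \leq (|x|^2+|t|)/K \leq |x|^2+|t|$ is immediate. For the lower bound, I write
\[
v(x,t) \geq \frac{L_1(x,t)-L_2(x,t)-(|x|^2+|t|)}{K} \geq -\frac{2C^*+(|x|^2+|t|)}{K},
\]
and note that $|x|^2+|t| \geq 36d$ on $Q\setminus Q_{6\sqrt{d}}$, so choosing $K$ large enough (in terms of $C^*$ and $d$) makes this last quantity at least $-(|x|^2+|t|)$; this $K$ is universal. Applying Proposition \ref{Prop 1} to $v$ yields $|G_{\bar M}(v,Q)\cap K_1|\geq 1-\rho$. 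Since $u = Kv + L_2$ and paraboloid openings scale multiplicatively under this affine-plus-dilation transformation, $G_{\bar M}(v,\cdot)\subset G_{K\bar M}(u,\cdot)$, and setting $M := K\bar M$ (still universal and of the same nature as the constant in Proposition \ref{Prop 1}) gives the conclusion.

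The chief subtlety is the first step: the universal $L^\infty$-bound on $L_1, L_2$. Everything after it is routine scaling exploiting the homogeneity and affine-invariance of the Isaacs operator, but the slope bound requires a careful interplay between the pointwise matching at a single interior point $(x_1,t_1) \in K_3$ and the one-sided bounds available on the strictly larger cube $Q_{8\sqrt{d}}$. It is precisely the geometric relation $K_3 \subsetneq Q_{8\sqrt{d}}$ (i.e., the room between the two cubes) that turns these one-sided bounds into two-sided control on the slopes of the affine parts.
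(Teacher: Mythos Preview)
Your approach is the same as the paper's: subtract the affine part of a touching paraboloid, divide by a large universal constant, and reduce to Proposition~\ref{Prop 1}. The paper is terser---it takes a single affine $L$ with the two-sided bound $|u-L|\le \tfrac12(|x-x_1|^2+|t-t_1|)$ on $Q$, sets $v=(u-L)/C$ with $C$ large enough to force both $\|v\|_{L^\infty(Q_{8\sqrt d})}\le 1$ and the paraboloid barrier on $Q\setminus Q_{6\sqrt d}$, notes that the rescaled source is $f/C$, and applies Proposition~\ref{Prop 1} with $M:=C\bar M$---whereas you carry two separate affine pieces $L_1,L_2$ and spell out the slope bounds that the paper leaves implicit.

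One point needs correction. Your sentence ``taking the hypothesis' $\varepsilon$ sufficiently small at the outset guarantees that $\|\tilde f\|_{L^{d+1}(Q_{8\sqrt d})}$ falls below the threshold'' is wrong if $\partial_t L_2\neq 0$: the contribution $\|\partial_t L_2\|_{L^{d+1}(Q_{8\sqrt d})}/K$ is a fixed universal quantity and does not shrink with $\varepsilon$. The fix is to enlarge $K$ (still universal, now also depending on the threshold of Proposition~\ref{Prop 1}), not to shrink $\varepsilon$. The paper avoids this wrinkle altogether by taking the affine part to be $t$-independent, so that the rescaled source is simply $f/C$; with that convention your $\partial_t L_2$ vanishes and the issue disappears.
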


\begin{proof}
Let $(x_1, t_1) \in G_1(u,Q)\cap K_3$. It implies that there exists an affine function $L$ such that
\[-\dfrac{|x-x_1|^2 + |t-t_1|}{2} \leq u(x,t) - L(x,t) \leq \dfrac{|x-x_1|^2 + |t-t_1|}{2} \;\;\; \text{in}\;\; Q.
\]
Now, we set 
\[
v:= \dfrac{u-L}{C},
\]
where $C>1$ is a large constant such that $\|v\|_{L^{\infty}(Q_{8\sqrt{d}})} \leq 1$ and
\[
-|x|^2 - |t| \leq v(x,t) \leq |x|^2 + |t| \;\ \text{in} \;\ Q\setminus Q_{6\sqrt{d}}.
\]
Notice that $v$ solves
\[
v_t + \sup_{\alpha \in {\cal A}}\inf_{\beta \in {\cal B}}(-\tr(A_{\alpha, \beta}(x,t)D^2v)) = \dfrac{f}{C}.
\]
If we set $M:=C\bar{M}$, from Proposition \ref{Prop 1} we obtain
\[
|G_M(u,Q)\cap K_1| = |G_{C\bar{M}}(u,Q)\cap K_1| = |G_{\bar{M}}(v,Q)\cap K_1| \geq 1 - \rho.
\]
\end{proof}

The following result is an application of Lemma \ref{lem_cov} and produces decay rates for the sets $A_M\cap K_1$. 

\begin{Proposition}\label{Prop 4}
Let $0<\rho<1$ and $u\in\mathcal{C}(Q)$ be a normalized $L^p$-viscosity solution to \eqref{eq_main} in $Q_{8\sqrt{d}}$. Extend $f$ by zero outside of $Q_{8\sqrt{d}}$. Suppose A\ref{assumption1}-A\ref{assumption3} hold true. Denote
\[
A\coloneqq A_{M^{k+1}}(u, Q_{8\sqrt{d}})\cap K_1
\]
and
\[
B := \left\lbrace A_{M^{k}}(u, Q_{8\sqrt{d}})\cap K_1\right\rbrace\cup\left\lbrace(x,t)\in K_1: m(f^{d+1})(x,t)\geq(c_1M^{k})^{d+1}\right\rbrace,
\]
where $c_1$ is a positive universal constant and $M>1$ depends only on $d$. Then,
\[
|A|\leq \rho|B|.
\]
\end{Proposition}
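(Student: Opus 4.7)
The plan is to apply the stacked covering lemma (Lemma~\ref{lem_cov}) to the pair $(A,B)$ with a stacking parameter $m\in\mathbb{N}$ chosen depending only on the dimension; once the two hypotheses of the lemma are verified, the bound $|A|\leq \frac{\rho(m+1)}{m}|B|$ it produces is adjusted to $|A|\leq \rho|B|$ by a harmless redefinition of $\rho$.

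Condition (i), that $|A|\leq\rho|K_1|$, follows from the monotonicity $A_{M^{k+1}}(u,Q_{8\sqrt d})\subset A_M(u,Q_{8\sqrt d})$ (since $M\geq 1$ and a larger opening only adds admissible paraboloids) combined with Proposition~\ref{Prop 2} applied at the base scale. The main work is condition (ii), which I prove by contrapositive: supposing there exists $(x_1,t_1)\in\bar K^m\setminus B$, I derive $|K\cap A|\leq\rho|K|$. From the definition of $B$, we have (a) $(x_1,t_1)\in G_{M^k}(u,Q_{8\sqrt d})$, which produces an affine function $\ell$ with
\[
|u(x,t)-\ell(x,t)|\leq\tfrac{M^k}{2}\bigl(|x-x_1|^2+|t-t_1|\bigr)\quad\text{in }Q_{8\sqrt d},
\]
and (b) the maximal function estimate $m(f^{d+1})(x_1,t_1)<(c_1 M^k)^{d+1}$. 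Letting $(x_0,t_0)$ be the top center of $\bar K^m$ and $r$ the spatial side length of $\bar K$, the past parabolic cube of radius a fixed multiple of $r$ at $(x_0,t_0)$ contains both $\bar K$ and $(x_1,t_1)$. I introduce the rescaled function
\[
\tilde u(y,s):=\frac{u(x_0+ry,\,t_0+r^2 s)-\ell(x_0+ry)}{C_0\,M^k\,r^2},
\]
with a universal constant $C_0$; property (a) forces $|\tilde u|\leq 1$ on $Q_{8\sqrt d}$ for a suitable $C_0$, and $\tilde u$ satisfies an Isaacs equation with rescaled coefficients $\tilde A_{\alpha,\beta}(y,s):=A_{\alpha,\beta}(x_0+ry,t_0+r^2 s)$ (which inherit A\ref{assumption1}--A\ref{assumption3}) and source $\tilde f(y,s)=(C_0 M^k)^{-1}f(x_0+ry,t_0+r^2 s)$.

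A change of variables combined with (b) yields
\[
\|\tilde f\|_{L^{d+1}(Q_{8\sqrt d})}^{d+1}\leq \frac{C}{C_0^{d+1}}\,m(f^{d+1})(x_1,t_1)\leq \Bigl(\tfrac{C^{1/(d+1)}c_1}{C_0}\Bigr)^{d+1},
\]
which is at most $\varepsilon^{d+1}$ once $c_1$ is taken small enough, where $\varepsilon$ is the threshold supplied by Proposition~\ref{Prop 2}. Moreover, the rescaled image of $(x_1,t_1)$ lies in $K_3$ by construction, so $G_1(\tilde u,\tilde Q)\cap K_3\neq\emptyset$. Proposition~\ref{Prop 2} then gives $|A_M(\tilde u,\tilde Q)\cap K_1|\leq\rho$; unrolling the rescaling shows that $K\cap A_{M^{k+1}}(u,Q_{8\sqrt d})=K\cap A$ has measure at most $\rho|K|$, contradicting the assumption and establishing (ii). The main obstacle will be the simultaneous calibration of $C_0$, $c_1$, $M$, and $m$ so that $\tilde u$ is normalized, $\tilde f$ is small enough for Proposition~\ref{Prop 2}, and the rescaled reference point sits in $K_3$; the parabolic geometry (where $\bar K^m$ lies forward in time from $\bar K$) makes the choice of $(x_0,t_0)$ subtler than in the elliptic case, but placing it at the top of $\bar K^m$ brings both $\bar K$ and $(x_1,t_1)$ into its backward parabolic cone.
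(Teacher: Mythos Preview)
Your argument follows the same template as the paper's: verify the two hypotheses of the stacked covering lemma, treating (ii) by contraposition via a parabolic rescaling that feeds into Proposition~\ref{Prop 2}. Your handling of step (ii) is in fact more explicit than the paper's, which writes the rescaling as $v(x,t)=\frac{2^{2i}}{M^k}u(x/2^i,t/2^{2i})$ and leaves the translation to the center of the dyadic cube and the subtraction of the affine function $\ell$ implicit.

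One small correction is needed for condition (i). You invoke Proposition~\ref{Prop 2} ``at the base scale,'' but that proposition carries the extra hypothesis $G_1(u,Q)\cap K_3\neq\emptyset$, which is not among the assumptions of Proposition~\ref{Prop 4} and is not otherwise available. The paper instead uses Proposition~\ref{Prop 1}: since $u$ is normalized, $|u(x,t)|\leq 1\leq |x|^2+|t|$ on $Q_{8\sqrt d}\setminus Q_{6\sqrt d}$, so the growth hypothesis of Proposition~\ref{Prop 1} is met and one obtains $|G_{\bar M}(u,Q_{8\sqrt d})\cap K_1|\geq 1-\rho$; your monotonicity remark then gives $|A|\leq\rho|K_1|$. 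With this substitution your proof goes through.
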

\begin{proof}
First, observe that
\[
|u(x,t)|\leq 1\leq |x|^2+|t| \ \ \mbox{in} \ \ Q_{8\sqrt{d}}\backslash Q_{6\sqrt{d}}.
\]
According to Proposition \ref{Prop 1}, we obtain 
\[
|G_{M^{k+1}}(u,Q_{8\sqrt{d}})\cap K_1|\geq 1-\rho,
\]
which implies that
\[
|A|=|A_{M^{k+1}}(u,Q_{8\sqrt{d}})\cap K_1|\leq \rho|K_1|.
\]
Now, consider any dyadic cube $K\coloneqq K_{1/2^i}$ of $K_1.$ Notice that
\begin{equation}\label{hypothesis}
|A_{M^{k+1}}(u,Q_{8\sqrt{d}})\cap K|=|A\cap K|>\rho|K|.
\end{equation}
It remains to see that $\bar{K}^m \subset B$, for some $m \in \mathbb{N}$. We proceed by a contradiction argument assuming that $\bar{K}^m\not\subset B$. Let $(x_1,t_1)$ such that
\begin{equation}\label{inter contradiction}
(x_1,t_1)\in\bar{K}^m\cap G_{M^k}(u, Q_{8\sqrt{d}})
\end{equation}
and
\begin{equation}\label{max_cont}
m(f^{d+1})(x_1,t_1)\leq (c_1M^k)^{d+1}.
\end{equation}
Define
\[
v(x,t)\coloneqq\frac{2^{2i}}{M^k}u\left(\frac{x}{2^i},\frac{t}{2^{2i}}\right).
\]
Since $Q_{8\sqrt{d}} \subset Q_{2^i\cdot8\sqrt{d}}$, we have that $v$ solves
\[
v_t + \sup_{\alpha \in {\cal A}} \inf_{\beta \in {\cal B}}
[ - {\tr}(A_{\alpha, \beta}(x, t) D^{2} v)]=\tilde{f} \ \ \mbox{in} \ \ Q_{8\sqrt{d}},
\]
where 
\[
\tilde{f}(x,t)\coloneqq\frac{1}{M^k}f\left(\frac{x}{2^i},\frac{t}{2^{2i}}\right).
\]
We have
\[
\|\tilde{f}\|_{L^{d+1}(Q_{8\sqrt{d}})}^{d+1}=\frac{2^{i(d+2)}}{M^{k(d+1)}}\displaystyle\int_{Q_{8\sqrt{d}/2^i}}|f(x,t)|^{d+1}dxdt\leq c(d)c_1^{d+1}.
\] 
Now, by choosing $c_1$ small enough in \eqref{max_cont} we obtain
\[
\|\tilde{f}\|_{L^{d+1}(Q_{8\sqrt{d}})}\leq\varepsilon.
\]
Furthermore, the inequality \eqref{inter contradiction} yields 
\[
G_1(v,Q_{8\sqrt{d}/2^i})\cap K_3\neq\emptyset.
\]
From Proposition \ref{Prop 2} we get
\[
|G_{M}(v, Q_{2^i\cdot8\sqrt{d}})\cap K_1|\geq(1-\rho) 
\]
\emph{i.e.},
\[
|G_{M^{k+1}}(u, Q_{8\sqrt{d}})\cap K|\geq(1-\rho)|K|,
\]
which contradicts \eqref{hypothesis}.
\end{proof}

At this point we are ready to prove the Proposition \ref{prop_sob}. 

\begin{proof}[Proof of Proposition \ref{prop_sob}]

Define
\[
\alpha_k\coloneqq|A_{M^k}(u,Q_{8\sqrt{d}})\cap K_1|
\]
and
\[
\beta_k\coloneqq|\{(x,t)\in K_1:m(f^{d+1})(x,t)\geq(c_1M^k)^{d+1}\}|.
\]

From Proposition \ref{Prop 4}, we have that
\[
\alpha_{k+1}\leq\rho(\alpha_k+\beta_k).
\]
Hence,
\begin{equation}\label{inequality01}
\alpha_k\leq\rho^k+	\displaystyle\sum_{i=1}^{k-1}\rho^{k-i}\beta_i.
\end{equation}

Since $f\in L^p(Q_1)$, it follows that $m(f^{d+1})\in L^{p/(d+1)}(Q_1),$ and, for some $C>0$
\[
\|m(f^{d+1})\|_{L^{p/(d+1)}(Q_1)}\leq C\|f\|^{d+1}_{L^p(Q_1)}.
\]
Therefore, 
\begin{equation}\label{inequality02}
\displaystyle\sum_{k=0}^{\infty}M^{pk}\beta_k\leq C.
\end{equation}

By combining \eqref{inequality01} and \eqref{inequality02} and choosing $\rho$ such that $\rho M^p \leq 1/2$, we obtain
\begin{equation*}
\begin{aligned}
\displaystyle\sum_{k=1}^{\infty}M^{pk}\alpha_k&\leq\displaystyle\sum_{k=1}^{\infty}(\rho M^p)^k+\displaystyle\sum_{k=1}^{\infty}\sum_{i=0}^{k-1}\rho^{k-i}M^{p(k-i)}\beta_iM^{pi}\\
&\leq\displaystyle\sum_{k=1}^{\infty}2^{-k}+\left(\displaystyle\sum_{i=0}^{\infty}M^{pi}\beta_i\right)\left(\displaystyle\sum_{j=1}^{\infty}(\rho M^p)^j\right)\\
&\leq \displaystyle\sum_{k=1}^{\infty}2^{-k}+C\displaystyle\sum_{j=1}^{\infty}2^{-j}\\
&\leq C.
\end{aligned}
\end{equation*}
This concludes the proof.
\end{proof}

Finally, we detail the proof of Theorem \ref{theorem01}. 
\begin{proof}[Proof of Theorem \ref{theorem01}]
We split the proof in two steps. \\

\textbf{Step 1}
\bigskip

 First, by a reduction argument, we see that it is enough to prove the result for $L^p$-viscosity solutions to \eqref{eq_main}.

Let $u$ be an $L^p$-viscosity solution to \eqref{equation03}. By \cite[Proposition 3.2]{cra_ko_swI}, $u$ is parabolic twice differentiable a.e. and its pointwise derivatives satisfy \eqref{equation03} in $Q_1$. Define
\[
g(x,t) := u_t +  \sup_{\alpha \in {\cal A}} \inf_{\beta \in {\cal B}} [ - {\tr}(A_{\alpha, \beta}(x,t)D^{2} u)].
\]
It is easy to see that
\[
|g(x,t)| \leq |f(x,t)| + \sup_{\alpha \in {\cal A}} \sup_{\beta \in {\cal B}}|{\bf b}_{\alpha,\beta}(x,t)||Du|.
\]

According to the Theorem 7.3 in \cite{cra_ko_swI}, we have that $Du \in L^p(Q_1)$ with estimates. Furthermore, by Remark 7.7 in \cite{cra_ko_swI}, we obtain
\[
\|Du\|_{L^{p}(Q_{1/2})} \leq C\left(\|u \|_{L^{\infty}(Q_1)} + \| f\|_{L^p(Q_1)}\right), 
\]
for some constant $C>0$. Since ${\bf b}_{\alpha,\beta}$ satisfies A\ref{assumption_vectorb} and $f\in L^p(Q_1)$, we have that $g \in L^p_{loc}({Q_1})$, with $p>d+1$.  It follows from \cite[Proposition 4.1]{cra_ko_swI} that $u$ is an $L^p$-viscosity solution to

\begin{equation}\label{eq_wgrad}
u_t +  \sup_{\alpha \in {\cal A}} \inf_{\beta \in {\cal B}} [ - {\tr}(A_{\alpha, \beta}(x,t)D^{2} u)] = g(x,t) \;\; \text{ in } \;\; Q_1.
\end{equation}
Therefore, if the Theorem \ref{theorem01} holds for $L^p$-viscosity solutions of \eqref{eq_wgrad}, we can conclude the proof.

\bigskip
\textbf{Step 2} 
\bigskip

Now, consider the equation
\begin{equation}\label{eq_wg}
u_t +  \sup_{\alpha \in {\cal A}} \inf_{\beta \in {\cal B}} [ - {\tr}(A_{\alpha, \beta}(x,t)D^{2} u)] = g(x,t) \;\; \text{ in } \;\; Q_1.
\end{equation}
Let $g_j \in {\mathcal C}(\overline{Q}_1)\cap L^p(Q_1)$ and $u_j$ such that
\[
\|g_j - g\|_{L^p(Q_1)} \rightarrow 0, \;\text{ as } \; j\rightarrow \infty,
\]
and
\begin{equation*}
\left\{
\begin{array}{rcl}
(u_j)_t +  \sup_{\alpha \in {\cal A}} \inf_{\beta \in {\cal B}} [ - {\tr}(A_{\alpha, \beta}(x,t)D^{2} u_j)] & =  g_j(x,t) & \text{ in } \;\; Q_1 \\
u_j(x,t) & =  u(x,t) & \mbox{ on } \;\; \partial Q_1. 
\end{array}
\right.
\end{equation*}


By Proposition \ref{prop_sob} we have that 
\[
\|u_j\|_{W^{2,1;p}(Q_{1/2})} \leq C\left( \|u_j\|_{L^\infty(Q_1)} + \|g_j\|_{L^p(Q_1)} \right).
\]
By using \cite[Proposition 2.6]{cra_ko_swI} and Sobolev embeddings (see for instance \cite{LSU}), we obtain that, up to a subsequence if necessary, $u_j \rightarrow \bar{u}$ in ${\mathcal C}(\overline{Q}_1)$. Moreover, $u_j$ converges weakly to $\bar{u}$ in $W_{loc}^{2,1;p}(Q_1)$. By stability results we have that $\bar{u}$ is a $L^{p}$-viscosity solution to \eqref{eq_wg}; see \cite{cra_ko_swI}, \cite{imbersil}. In addition,
\[
\|\bar{u}\|_{W^{2,1;p}(Q_{1/2})} \leq C\left( \|\bar{u}\|_{L^\infty(Q_1)} + \|g\|_{L^p(Q_1)} \right).
\]
Compatibility on the parabolic boundary and the maximum principle \cite[Lemma 6.2]{cra_ko_swI} guarantee that $\bar{u} = u$. This finishes the proof.
\end{proof}

\begin{Remark}
An important development concerning regularity of the solutions in Sobolev spaces to fully nonlinear equations in the elliptic setting was pursuit in \cite{nik}. In that paper, the author develops a global, up to the boundary, estimate in $W^{2,p}$. We believe a similar line of arguments could be developed also in the parabolic setting, leading to a global regularity also in the context of the Isaacs model.
\end{Remark}


\begin{Remark}
We believe that under further conditions on $f$, namely $f \in \bmo$, it would be possible to prove that $D^2u$ and $u_t$ are in $\bmo$, locally. We refer to \cite{tei_pim} for the elliptic case.
\end{Remark}

\section{Regularity in $\mathcal{C}^{1,\llip}$ spaces}

This section is devoted to prove the parabolic $\mathcal{C}^{1,\llip}(Q_1)$ interior regularity estimates for solutions to \eqref{eq_main}. In order to prove this result, initially we establish a second approximation lemma which unlocks the geometric argument. 

\begin{Proposition}[Second approximation lemma]\label{third approximation}
Let $u\in\mathcal{C}(Q_1)$ be an $L^p$-viscosity solution to \eqref{eq_main}. Assume A\ref{assumption1} and A\ref{assumption4} are in force. Given $\delta>0$, there exists $\varepsilon_2>0$ such that, if
\[
\|f \|_{\bmo(Q_1)} \leq \varepsilon_2,
\] 
then we can find $h \in {\cal C}^{2,\bar{\gamma}}(Q_{3/4})$, for some $0<\bar{\gamma}<1$, satisfying 
\begin{equation*}
\left \{
\begin{array}{ll}
\displaystyle h_t + \inf_{\beta \in {\cal B}} [ - {\tr}(\bar{A}_{\beta}(0,0) D^{2}h)] = 0  & \text{in} \quad  Q_{ 3/4}, \\
h = u                                                                                                   & \text{on} \quad  {\partial}Q_{ 3/4}, 
\end{array}
\right. 
\end{equation*}
such that
\[
\|u-h\|_{L^\infty(Q_{3/4})}\leq\delta.
\]
Furthermore, $\|h\|_{\mathcal{C}^{2,\bar{\gamma}}(Q_{3/4})}\leq C$, for some $C>0$ a universal constant.
\end{Proposition}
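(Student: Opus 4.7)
The argument is by contradiction and compactness, mirroring the First Approximation Lemma but now harvesting regularity for the limit from the parabolic Evans--Krylov theory for the constant-coefficient Bellman operator. Suppose the conclusion fails: there exist $\delta_0>0$ and sequences $(A^n_{\alpha,\beta})$, $(\bar A^n_\beta)$, $(f_n)$, with
\[
\sup_{Q_r(x_0,t_0)}|A^n_{\alpha,\beta}(x,t)-\bar A^n_\beta(x_0,t_0)|\le 1/n,\qquad \|f_n\|_{\bmo(Q_1)}\le 1/n,
\]
and normalized $L^p$-viscosity solutions $u_n$ of the associated Isaacs equation, such that the unique $L^p$-viscosity solution $h_n$ of
\[
(h_n)_t+\inf_{\beta\in\mathcal{B}}\bigl[-\tr(\bar A^n_\beta(0,0)D^2h_n)\bigr]=0 \text{ in } Q_{3/4},\quad h_n=u_n \text{ on } \partial_pQ_{3/4},
\]
satisfies $\|u_n-h_n\|_{L^\infty(Q_{3/4})}>\delta_0$ for every $n$. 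The well-posedness of $h_n$ is standard for parabolic Bellman Dirichlet problems, see \cite{cra_ko_swI, wangI, imbersil}.

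To pass to the limit I would combine three compactness ingredients. First, $\mathcal{B}$ is countable and each $\bar A^n_\beta(0,0)$ lives in the compact set $[\lambda I,\Lambda I]$, so a diagonal extraction yields $\bar A^n_\beta(0,0)\to\bar A^\infty_\beta$ for every $\beta$; combined with A\ref{assumption4}, this forces $A^n_{\alpha,\beta}(x,t)\to\bar A^\infty_\beta$ locally uniformly in $(x,t)$ and uniformly in $\alpha$. Second, the source is decomposed as $f_n=c_n+\tilde f_n$ with $c_n:=\langle f_n\rangle_{Q_1}$, so that John--Nirenberg produces $\|\tilde f_n\|_{L^p(Q_1)}\le C/n$; the auxiliary function $v_n:=u_n-c_n t$ preserves the Isaacs structure with source $\tilde f_n$, and an ABP estimate applied to $v_n$, together with the normalization $\|u_n\|_{L^\infty}\le 1$, controls $c_n$ uniformly. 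Third, parabolic Krylov--Safonov H\"older theory \cite{KrySaf, wangI, imbersil}, interior and up to $\partial_pQ_{3/4}$, yields uniform $\mathcal{C}^\alpha(\overline{Q}_{3/4})$-bounds for $v_n$ and $h_n$. Along a subsequence $c_n\to c_\infty$, $v_n\to v_\infty$, $h_n\to h_\infty$ uniformly on $\overline{Q}_{3/4}$, and by the $L^p$-viscosity stability results of \cite{cra_ko_swI, ccks1996}, both $u_\infty:=v_\infty+c_\infty t$ and $h_\infty$ are $L^p$-viscosity solutions of the same constant-coefficient Bellman problem with common boundary data $u_\infty|_{\partial_pQ_{3/4}}$. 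The comparison principle for parabolic Bellman operators then forces $u_\infty\equiv h_\infty$ in $Q_{3/4}$, contradicting $\|u_n-h_n\|_{L^\infty(Q_{3/4})}>\delta_0$ and producing the desired $h$.

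The estimate $\|h\|_{\mathcal{C}^{2,\bar\gamma}(Q_{3/4})}\le C$ follows directly from the parabolic Evans--Krylov theorem \cite{krylov1, krylov2, wangII} applied to the convex constant-coefficient Bellman operator, in combination with the maximum principle bound $\|h\|_{L^\infty(Q_{3/4})}\le\|u\|_{L^\infty(Q_{3/4})}\le 1$. The main obstacle I anticipate is the stability step: the $\bmo$-smallness of $f_n$ does not by itself imply $L^p$-smallness, so the mean-zero decomposition $f_n=c_n+\tilde f_n$ and the translation $v_n=u_n-c_n t$ are what bring the problem into the $L^p$-viscosity stability framework. The careful bookkeeping of the $c_n t$ correction at the boundary and the uniform control on $c_n$ through ABP applied to $v_n$ are the most delicate parts; the ancillary boundary matching of the limits depends on having parabolic Krylov--Safonov H\"older regularity up to $\partial_pQ_{3/4}$, which is available from the cited references.
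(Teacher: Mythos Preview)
Your argument follows the same contradiction--compactness--stability--Evans--Krylov template as the paper, so the overall strategy matches. The paper's own proof is terser: it simply posits sequences with $|A^n_{\alpha,\beta}-\bar A_\beta(0,0)|+\|f_n\|_{L^p(Q_{3/4})}\le 1/n$, extracts a $\mathcal{C}^{0,\gamma}$-convergent subsequence $u_n\to u_\infty$, invokes stability to get $u_\infty$ solving the constant-coefficient Bellman equation, and sets $h=u_\infty$.

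Two points where your write-up is more careful than the paper's. First, the paper silently replaces $\bmo$-smallness of $f_n$ by $L^p$-smallness, which is not automatic (constants have zero $\bmo$ norm). Your John--Nirenberg decomposition $f_n=c_n+\tilde f_n$ and the shift $v_n=u_n-c_n t$ is exactly the right device to bring the problem into the $L^p$-stability framework. Second, you track the Dirichlet solutions $h_n$ with boundary data $u_n$ and close via comparison, whereas the paper simply takes $h=u_\infty$ without verifying the boundary condition $h=u$ on $\partial Q_{3/4}$; your route is the cleaner one.

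One step that deserves a sharper justification: the claim that ``ABP applied to $v_n$ controls $c_n$ uniformly'' is not quite right as stated, since the boundary values of $v_n$ themselves involve $c_n$. A cleaner way to bound $c_n$ is by scaling: if $|c_n|\to\infty$, set $\hat u_n:=u_n/c_n$, which solves the Isaacs equation with source $1+\tilde f_n/c_n$; then $\hat u_n\to 0$ uniformly and stability forces $0$ to be a viscosity supersolution of an equation with right-hand side $1$, contradicting $\mathcal{I}_\infty(0)=0$. With this fix your argument goes through.
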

\begin{proof}
	By a contradiction argument, assume that the statement of the proposition is false. Then, we can find  $\delta_0 > 0$ and sequences $(A_{\alpha, \beta}^n)_{n\in\mathbb{N}}, (f_n)_{n\in\mathbb{N}}$ and $(u_n)_{n\in\mathbb{N}}$ satisfying
	\begin{itemize}
		\item [(i)] $|A^n_{\alpha, \beta}(x,t) - \bar{A}_{\beta}(0,0)| + \|f_n\|_{L^p(Q_{3/4})} \leq 1/n;$
		\item[(ii)] $(u_n)_t +  \displaystyle\sup_{\alpha \in {\cal A}} \inf_{\beta \in {\cal B}}
		\left[ - {\tr}(A^n_{\alpha, \beta}(x, t) D^{2} u_n(x, t))\right]= f_n(x, t)$;
	\end{itemize}
	however
	$$\|u-h\|_{L^{\infty}(Q_{3/4})} > \delta_0,$$
	for every $h \in {\cal C}^{2, \bar\gamma}(Q_{3/4})$ and every $\bar{\gamma}\in(0,1)$.
	
	Because of (ii), the sequence $(u_n)_{n\in\mathbb{N}}$ is uniformly bounded in $\mathcal{C}^{0,\gamma}$, for some $\gamma\in(0,1)$; see \cite{imbersil}, \cite{KrySaf}. Hence $u_n$ converges to a function $u_{\infty}$ locally uniformly in $Q_1$. By standard stability results of viscosity solutions, we have that
	\[
	(u_{\infty})_t + \inf_{\beta \in {\cal B}} [ - {\tr}(\bar{A}_{\beta}(0, 0) D^{2} u_{\infty})] = 0;
	\]
	see \cite{cra_ko_swI}, \cite{imbersil}.
	Since the Bellman operator is convex, the Evans-Krylov's regularity theory assures that $u_{\infty}\in\mathcal{C}^{2, \bar\gamma}(Q_{3/4})$, for some $\bar{\gamma}\in(0,1)$ and that $\|u_{\infty}\|_{\mathcal{C}^{2,\bar\gamma}(Q_{3/4})}\leq C$, with $C>0$ a universal constant; see \cite{krylov1}, \cite{krylov2}. Setting $h=u_{\infty}$ we obtain a contradiction.
\end{proof}

\medskip

The Approximation Lemma provides a tangential path connecting the Bellman parabolic model with our problem of interest. The next Proposition ensures the existence of an approximating quadratic polynomial, which is the key for the proof of $\mathcal{C}^{1,\llip}$-estimates. For simplicity, when the point is the origin,  we denote $\langle f \rangle_r$ instead of $\langle f\rangle_{(0, 0), r}$.

\begin{Proposition}\label{induction1}
Let $u\in\mathcal{C}(Q_1)$ be an $L^p$-viscosity solution to \eqref{eq_main}. Assume A\ref{assumption1} and A\ref{assumption4} hold. Then, there exists $ \varepsilon_2>0$ such that if
\[
\|f \|_{\bmo(Q_1)} \leq \varepsilon_2,
\] 
one can find $0<\rho\ll 1$ and a sequence of second order polynomials $(P_n)_{n\in\mathbb{N}}$ of the form
\[
P_n(x,t)\coloneqq a_n+b_n\cdot x+c_n \,t+\frac{1}{2}x^{t}d_nx
\]
satisfying:
\[
c_n+\displaystyle\inf_{\beta \in {\cal B}}\left[-\tr(\bar{A}_{\beta}(0,0)d_n)\right]=\langle f \rangle_1,
\]
\[
\displaystyle\sup_{Q_{\rho^n}}|u(x,t)-P_n(x,t)|\leq\rho^{2n}
\]
and
\begin{equation}\label{condition coefficients}
|a_{n-1}-a_n|+\rho^{n-1}|b_{n-1}-b_n|+\rho^{2(n-1)}(|c_{n-1}-c_n|+|d_{n-1}-d_n|)\leq C\rho^{2(n-1)},
\end{equation}
for every $n\geq0.$
\end{Proposition}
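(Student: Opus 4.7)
We proceed by induction on $n$. For the base case, choose $P_{-1}:=0$ and $P_0(x,t):=\langle f\rangle_1\,t$; then $c_0=\langle f\rangle_1$ and $a_0=b_0=d_0=0$, so condition (a) is immediate, while (b) and (c) at $n=0$ follow from the normalization on $u$ and the smallness of $\|f\|_{\bmo(Q_1)}$.

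For the inductive step, assume $P_n$ has been constructed, and introduce the rescaled function
\[
v_n(x,t):=\rho^{-2n}\bigl[u(\rho^n x,\rho^{2n}t)-P_n(\rho^n x,\rho^{2n}t)\bigr],
\]
which by hypothesis satisfies $|v_n|\le 1$ in $Q_1$. A direct computation using the equation for $u$ shows that $v_n$ is an $L^p$-viscosity solution in $Q_1$ of the shifted Isaacs equation
\[
(v_n)_t+\sup_{\alpha}\inf_{\beta}\bigl[-\tr\bigl(\tilde A_{\alpha,\beta}(x,t)(D^2 v_n+d_n)\bigr)\bigr]=f(\rho^n x,\rho^{2n}t)-c_n,
\]
where $\tilde A_{\alpha,\beta}(x,t):=A_{\alpha,\beta}(\rho^n x,\rho^{2n}t)$ still verifies A\ref{assumption4}. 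Exploiting condition (a) on $P_n$, we rewrite the right-hand side as $\bigl[f(\rho^n x,\rho^{2n}t)-\langle f\rangle_1\bigr]+\inf_\beta[-\tr(\bar A_\beta(0,0)d_n)]$, whose non-constant part has $\bmo$-norm controlled by $\|f\|_{\bmo(Q_1)}\le\varepsilon_2$.

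The heart of the argument is now a variant of Proposition \ref{third approximation} tailored to this shifted equation. Following the contradiction scheme in the proof of that proposition, for every $\delta>0$ there exists $\varepsilon_2>0$ such that one produces $h_n\in\mathcal{C}^{2,\bar\gamma}(Q_{3/4})$ solving
\[
(h_n)_t+\inf_{\beta}\bigl[-\tr\bigl(\bar A_\beta(0,0)(D^2 h_n+d_n)\bigr)\bigr]=\inf_\beta\bigl[-\tr(\bar A_\beta(0,0)d_n)\bigr]\text{ in }Q_{3/4},
\]
with $h_n=v_n$ on $\partial Q_{3/4}$ and $\|v_n-h_n\|_{L^\infty(Q_{3/4})}\le\delta$. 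The decisive feature is that, writing $G(M):=\inf_\beta[-\tr(\bar A_\beta(0,0)(M+d_n))]$ and $\tilde G(M):=G(M)-G(0)$, the function $h_n$ solves $(h_n)_t+\tilde G(D^2 h_n)=0$ with $\tilde G$ convex and $\tilde G(0)=0$. The Evans-Krylov theorem then yields the \emph{universal} estimate
\[
\|h_n\|_{\mathcal{C}^{2,\bar\gamma}(Q_{3/4})}\le C\|h_n\|_{L^\infty(Q_{7/8})}\le C,
\]
independent of $n$ and of $\|d_n\|$.

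Let $T_n$ denote the second-order Taylor polynomial of $h_n$ at the origin. Its coefficients are universally bounded by $\|h_n\|_{\mathcal{C}^{2,\bar\gamma}}\le C$, and Taylor's theorem applied with the H\"older regularity of $D^2 h_n$ and $\partial_t h_n$ gives $|h_n-T_n|_{L^\infty(Q_\rho)}\le C\rho^{2+\bar\gamma}$. Combining the approximation and Taylor errors,
\[
|v_n-T_n|_{L^\infty(Q_\rho)}\le\delta+C\rho^{2+\bar\gamma}.
\]
Fix $0<\rho\ll 1$ with $C\rho^{\bar\gamma}\le 1/2$ and then take $\delta=\rho^2/2$ by further reducing $\varepsilon_2$; one obtains $|v_n-T_n|_{L^\infty(Q_\rho)}\le\rho^2$. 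Define
\[
P_{n+1}(y,s):=P_n(y,s)+\rho^{2n}T_n(\rho^{-n}y,\rho^{-2n}s),
\]
adjusting the coefficient of $s$ in the correction by $O(\varepsilon_2)$ so that condition (a) holds exactly at stage $n+1$; this adjustment contributes only $O(\varepsilon_2\rho^2)$ to the error and is absorbed into the $\rho^2$ budget. Condition (b) for $P_{n+1}$ is the rescaled form of $|v_n-T_n|_{L^\infty(Q_\rho)}\le\rho^2$, and condition (\ref{condition coefficients}) follows from the universal bound on the Taylor coefficients of $h_n$, closing the induction.

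The main obstacle is the universal $\mathcal{C}^{2,\bar\gamma}$-estimate for $h_n$ uniformly in $n$. Since $\|d_n\|$ is allowed to grow linearly in $n$ (as is natural in the $\mathcal{C}^{1,\llip}$ setting), a naive application of Evans-Krylov to the equivalent standard Bellman equation satisfied by $h_n+\tfrac12 x^{\top}d_n x$ produces an estimate scaling with $\|d_n\|$, which would defeat condition (\ref{condition coefficients}). The cancellation that makes the estimate universal---choosing the right-hand side of $h_n$'s equation equal to $G(0)$ so that $\tilde G$ is a normalized convex operator---is made possible precisely by condition (a) on $P_n$ combined with the BMO smallness of $f$, which together keep the source of $v_n$'s equation in an $O(\varepsilon_2)$-BMO neighborhood of the constant $G(0)$.
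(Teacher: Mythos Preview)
Your proposal is correct and follows essentially the same scheme as the paper: induction on $n$, rescaling to $v_n$, approximation by a Bellman solution $h_n$, the crucial observation that $\tilde G(M)=G(M)-G(0)$ is a normalized convex operator so that Evans--Krylov yields a $\mathcal{C}^{2,\bar\gamma}$ bound on $h_n$ \emph{independent of $\|d_n\|$}, Taylor expansion of $h_n$ at the origin, and the definition of $P_{n+1}$.

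Two small remarks. First, your base case differs from the paper's: you take $P_0(x,t)=\langle f\rangle_1\,t$, while the paper takes $P_0(x,t)=\tfrac12 x^{\top}Qx$ with $Q$ chosen so that $\inf_\beta[-\tr(\bar A_\beta(0,0)Q)]=\langle f\rangle_1$. Both choices are valid once one has reduced (by scaling) to $|\langle f\rangle_1|$ small, and both feed the induction in the same way.

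Second, your ``adjustment of the coefficient of $s$ by $O(\varepsilon_2)$'' is unnecessary, and the paper dispenses with it. Since $h_n$ is a classical solution, evaluating its equation at the origin gives exactly
\[
(h_n)_t(0,0)+\inf_{\beta}\bigl[-\tr\bigl(\bar A_\beta(0,0)(D^2h_n(0,0)+d_n)\bigr)\bigr]=\langle f\rangle_1-c_n,
\]
so with $c_{n+1}:=c_n+(h_n)_t(0,0)$ and $d_{n+1}:=d_n+D^2h_n(0,0)$ one obtains $c_{n+1}+\inf_\beta[-\tr(\bar A_\beta(0,0)d_{n+1})]=\langle f\rangle_1$ on the nose. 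In other words, condition (a) at stage $n+1$ is inherited \emph{automatically} from the equation for $h_n$; there is nothing to adjust and no extra $O(\varepsilon_2\rho^2)$ error to absorb.
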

\begin{proof}
First, we may assume $\|u\|_{L^{\infty}(Q_1)}\leq1/2$, by a reduction argument. We argue by induction in $n \geq0$. We present the proof in four steps. 
	
\medskip
	
\textbf{Step 1}
	
\medskip
	
Define
\[
P_{-1}(x,t)=P_0(x,t)=\frac{1}{2}x^tQx,
\]
where $Q$ is such that
\[
\displaystyle\inf_{\beta \in {\cal B}}[-\tr(\bar{A}_{\beta}(0,0)Q)]=\langle f \rangle_1.
\]
	
The case $n=0$ is obviously satisfied. Suppose the induction hypotheses have been established for $n=1, \dots, k$, for some $k\in\mathbb{N}$. Let us show that the case $n=k+1$ also holds true. Define an auxiliary function $v_k:Q_1\rightarrow\mathbb{R}$ as
\[
v_k(x,t)\coloneqq\frac{(u-P_k)(\rho^kx,\rho^{2k}t)}{\rho^{2k}}.
\]  
	
Observe that $v_k$ solves the equation
\begin{equation*}\label{equation_vk}
(v_k)_{t}+\left(\sup_{\alpha \in {\cal A}} \inf_{\beta \in {\cal B}}
\left[ - {\tr}(A_{\alpha, \beta}(\rho^kx,\rho^{2k}t) (D^{2}v_k+d_k))\right]+c_k\right)=f_k(x,t) \ \ \mbox{in} \ \ Q_1
\end{equation*}
where $f_k(x,t)=f(\rho^kx,\rho^{2k}t).$

\medskip

\textbf{Step 2}

\medskip

By induction hypothesis we conclude that $|v_k|\leq1$. Also, from the assumption A\ref{assumption4}, notice that
\[
|A_{\alpha,\beta}(\rho^kx,\rho^{2k}t)-\bar{A}_{\beta}(0,0)|\leq\varepsilon_2.
\]
Moreover,
\[
\begin{array}{rcl}
\displaystyle \dashint_{Q_r}|f_k(x,t) - \langle f_k\rangle_r|dx dt & = & 
\dfrac{1}{|Q_{r\rho^k}|}\displaystyle \int_{Q_{r\rho^k}}|f(y, s) - \langle f \rangle_{r\rho^k}| dyds \vspace{0.2cm} \\
&\leq &\displaystyle\sup_{0<r\leq1}\displaystyle \dashint_{Q_r}|f(x,t) - \langle f\rangle_r|dx dt\\
& = & \|f\|_{\bmo(Q_1)} \\
& \leq & \varepsilon_2.
\end{array}
\]

Observe that, if we have $v\in\mathcal{C}(Q_1)$ a viscosity solution to 
\[
v_t+\displaystyle\inf_{\beta \in {\cal B}}\left[-\tr(\bar{A}_{\beta}(0,0)D^2v)\right]=0,
\]
then,  by applying the Evans-Krylov's parabolic regularity theory we obtain that $v\in\mathcal{C}^{2,\bar\gamma}_{loc}(Q_1)$, for some $\bar\gamma\in(0,1)$; see \cite{krylov1}, \cite{krylov2}. Furthermore, the following estimate holds 
\[
\|v\|_{\mathcal{C}^{2,\bar\gamma}(Q_{1/2})}\leq C_1,
\]
with $C_1>0$ a universal constant. 

However, from the induction hypothesis, we have
\[
c_k+\displaystyle\inf_{\beta \in {\cal B}}\left[-\tr(\bar{A}_{\beta}(0,0)d_k)\right]=\langle f\rangle_1.
\]
Therefore, it follows that solutions to
\[
v_t+c_k+\displaystyle\inf_{\beta \in {\cal B}}\left[-\tr(\bar{A}_{\beta}(0,0)(D^2v+d_k))\right]=\langle f\rangle_1
\]
are of class $\mathcal{C}^{2,\bar\gamma}_{loc}(Q_1)$, for some $\bar\gamma\in(0,1)$, with estimate
\[
\|v\|_{\mathcal{C}^{2,\bar\gamma}(Q_{1/2})}\leq C=C(\langle f\rangle_1, C_1).
\]
Indeed, if we define the operator 
	\[
	G(M)\coloneqq\displaystyle\inf_{\beta \in {\cal B}}\left[-\tr(\bar{A}_{\beta}(0,0)(M+d_k))\right]-\displaystyle\inf_{\beta \in {\cal B}}\left[-\tr(\bar{A}_{\beta}(0,0)d_k)\right],
	\]
	we obtain that $v$ solves 
	\begin{equation*}
	\begin{aligned}
	v_t+G(D^2v)=&\;v_t+c_k+\displaystyle\inf_{\beta \in {\cal B}}\left[-\tr(\bar{A}_{\beta}(0,0)(D^2v+d_k))\right]\\
	&-c_k-\displaystyle\inf_{\beta \in {\cal B}}\left[-\tr(\bar{A}_{\beta}(0,0)d_k)\right]\\
	=&\;\langle f\rangle_1-\langle f\rangle_1\\
	=&\;0.
	\end{aligned}
	\end{equation*}
Moreover, since the Bellman operator is uniformly elliptic and convex, it follows that $G:\mathcal{S}(d)\rightarrow\mathbb{R}$ is also a uniformly elliptic and convex operator.

\medskip

\textbf{Step 3}

\medskip

As a consequence of Step 2, we have that Proposition \ref{third approximation} holds true for $v_k$. Hence, we can find a function $h\in\mathcal{C}^{2,\bar\gamma}_{loc}(Q_1)$, for some $\bar\gamma\in(0,1)$, satisfying
\begin{equation}\label{equationh1}
h_t+c_k+\displaystyle\inf_{\beta \in {\cal B}}[-\tr(\bar{A}_{\beta}(0,0)(D^2h+d_k))]=\langle f\rangle_1 \ \ \mbox{in} \ \ Q_1,
\end{equation}
such that 
\[
\sup_{Q_\rho}|v_k(x,t)-h(x,t)|\leq\delta
\]
for given $\delta>0$ which we choose below.

Define
\[
\bar{P}(x,t)\coloneqq h(0,0)+Dh(0,0)\cdot x+h_t(0,0)\,t+\frac{1}{2}x^t D^2h(0,0)x.
\]
Then, since $h\in\mathcal{C}^{2,\bar\gamma}_{loc}(Q_1),$ we have that
\begin{equation}\label{estimate h}
|D^2h(0,0)|+|h_t(0,0)|+|Dh(0,0)|+|h(0,0)|\leq C
\end{equation}
and
\[
\sup_{Q_\rho}|h(x,t)-\bar{P}(x,t)|\leq C\rho^{2+\bar{\gamma}}.
\]
Therefore, from the triangular inequality, it follows that
\begin{equation*}\label{triangular ine}
\begin{aligned}
\displaystyle\sup_{Q_\rho}|v_k(x,t)-\bar{P}(x,t)|&\leq \displaystyle\sup_{Q_\rho}|v_k(x,t)-h(x,t)|+\displaystyle\sup_{Q_\rho}|h(x,t)-\bar{P}(x,t)|\\
&\leq\delta+C\rho^{2+\bar{\gamma}}.
\end{aligned}
\end{equation*}
In the sequel, we make the universal choices
\[
\delta\coloneqq\frac{\rho^2}{2} \ \ \ \mbox{and} \ \ \ \rho\coloneqq\left(\frac{1}{2C}\right)^{1/\bar{\gamma}}
\]
to obtain
\begin{equation}\label{k+1-step1}
\displaystyle\sup_{Q_\rho}|v_k(x,t)-\bar{P}(x,t)|\leq\rho^2.
\end{equation}

Setting
\[
P_{k+1}(x,t)\coloneqq P_k(x,t)+\rho^{2k}\bar{P}(\rho^{-k}x,\rho^{-2k}t)
\]
we can conclude from \eqref{k+1-step1}
\[
\displaystyle\sup_{Q_{\rho^{k+1}}}|u(x,t)-P_{k+1}(x,t)|\leq\rho^{2(k+1)}.
\]

Note that by choosing $\rho$, we fix $\delta$ which determines the value of $\varepsilon_2$.

\medskip

\textbf{Step 4}

\medskip

From the definition of $P_{k+1}$ we have that $c_{k+1}=c_k+h_t(0,0)$ and $d_{k+1}=d_k+D^2h(0,0);$ therefore from \eqref{equationh1}, we have
\[
c_{k+1}+\displaystyle\inf_{\beta \in {\cal B}}[-\tr(\bar{A}_{\beta}(0,0)d_{k+1})]=\langle f\rangle_1.
\]

To conclude the $(k+1)$-th step of the induction, note that, since $a_{k+1}=a_k+\rho^{2k}h(0,0)$ and $b_{k+1}=b_k+\rho^k Dh(0,0),$ from \eqref{estimate h} we obtain that
\[
|a_{k+1}-a_k|+\rho^{k}|b_{k+1}-b_k|+\rho^{2k}(|c_{k+1}-c_k|+|d_{k+1}-d_k|)\leq C\rho^{2k}.
\]
The proof of the proposition is now complete.
\end{proof}

\medskip

Finally, we are able to prove the Theorem \ref{theorem02} which we describe in details below.

\begin{proof}[Proof of the Theorem \ref{theorem02}]
Without loss of generality, consider $(x_0,t_0)=(0,0).$ First, it follows from \eqref{condition coefficients} that the sequences $(a_n)_{n\in\mathbb{N}}$ and $(b_n)_{n\in\mathbb{N}}$ are convergent sequences to $u(0,0)$ and $Du(0,0)$, respectively. Moreover,
\[
|a_n-u(0,0)|\leq C\rho^{2n} \ \ \mbox{and} \ \ |b_n-Du(0,0)|\leq C\rho^n.
\]
	
Furthermore, the estimate in \eqref{condition coefficients} yields
\[
|c_n|\leq\displaystyle\sum_{j=1}^{n}|c_j-c_{j-1}|\leq Cn
\]
and
\[
|d_n|\leq\displaystyle\sum_{j=1}^{n}|d_j-d_{j-1}|\leq Cn.
\]
	
Let $0<r\ll 1$ and fix $n\in\mathbb{N}$ such that $\rho^{n+1}<r<\rho^n$. Hence, we estimate from the previous computations
\begin{equation*}
\begin{array}{rl}
&\displaystyle\sup_{Q_{r}}\left|u(x,t)-[u(0,0)+Du(0,0)\cdot x]\right| \vspace{0.2cm}\\
&\quad\quad\leq \displaystyle\sup_{Q_{\rho^n}}\left|u(x,t)-P_n(x,t)\right|+\displaystyle\sup_{Q_{\rho^n}}\left|P_n(x,t)-[u(0,0)+Du(0,0)\cdot x]\right|\\
&\quad\quad\leq  \rho^{2n}+|a_n-u(0,0)| + \displaystyle \rho^n|b_n-Du(0,0)| + \rho^{2n}(|c_n|+|d_n|)\vspace{0.2cm}\\
&\quad\quad\leq C\rho^{2n} + C\rho^{2n} + \rho^{2n}|c_n| + \rho^{2n}|d_n|\vspace{0.2cm}\\
&\quad\quad\leq   Cn \rho^{2n} \vspace{0.2cm}\\
&\quad\quad\leq   -\frac{2C}{\rho^2\ln\rho} r^2\ln r^{-1},

\end{array}
\end{equation*}
The last inequality follows from the fact that $r <\rho^n$ implies $n< \frac{\mbox{ln} \;r}{\mbox{ln} \;\rho}$.  This finishes the proof.
\end{proof}

\begin{Remark}
In general, one of the important facts concerning the study of ${\mathcal C}^{\llip}$ regularity lies on the following: assume $u \in {\mathcal C}^{\llip}(Q)$, $Q\Subset Q_1$ and let $\gamma \in (0,1)$. Notice that
\[
\lim_{s\to 0^+}-s^{1-\gamma}\ln s = 0.
\]
Thus, 
\[
s^{1-\gamma}\ln(1/s) \leq C = C(\gamma), \;\;\mbox{ for } \;\; s <1/2.
\]
It implies that
\begin{equation*}
	\begin{aligned}
		|u(x_1, t_1) - u(x_2,t_2)| &\leq C \dist((x_1,t_1),(x_2,t_2))\ln\left(\frac{1}{\dist((x_1,t_1),(x_2,t_2))}\right)\\
		&\leq C(\gamma)\dist((x_1,t_1),(x_2,t_2))^\gamma.
	\end{aligned}
\end{equation*}
Hence $u \in {\mathcal C}^{0,\gamma}(Q)$. Therefore, once regularity in ${\mathcal C}^{\llip}$ is available, it is possible to conclude that $u \in {\mathcal C}^{0,\gamma}(Q)$, for every $\gamma \in (0,1)$. However, functions in ${\mathcal C}^{\llip}$ spaces may not be Lipschitz continuous. In fact, the Lipschitz logarithmical modulus of continuity $\omega(s)\coloneqq s\ln(1/s)$ is not a Lipschitz continuous function. 
\end{Remark}


\section{Improved ${\mathcal C}^{2,\gamma}$-estimates at the origin}

In this last section, we state and prove $\mathcal{C}^{2,\gamma}$-estimates at the origin. As in the previous section, this is achieved through approximation methods.  In the next result we prove the existence of a sequence of second order polynomials which approximates $L^p$-viscosity solutions to \eqref{eq_main}.

\begin{Proposition}\label{induction2}
	Let $u\in\mathcal{C}(Q_1)$ be a normalized $L^p$-viscosity solution to \eqref{eq_main}. Assume that A\ref{assumption1} and A\ref{assumption5} hold. There exists a sequence of polynomials $(P_n)_{n\in\mathbb{N}}$ of the form
	\[
	P_n(x,t)\coloneqq a_n+b_n\cdot x+c_n\,t+\frac{1}{2}x^td_n\,x
	\]
	such that
	\begin{equation}\label{estimate1}
	\displaystyle\sup_{Q_{\rho^n}}|u(x,t)-P_n(x,t)|\leq \rho^{n(2+\gamma)}
	\end{equation}
	for some $0<\gamma<1$, and
	\[
	c_n+\displaystyle\inf_{\beta \in {\cal B}}(-\tr(\bar A_{\beta}(0,0)d_n))=0,
	\]
	for every $n\geq0$, where 
	\begin{equation}\label{estimate2}
	\footnotesize{|a_n-a_{n-1}|+\rho^{n-1}|b_n-b_{n-1}|+\rho^{2(n-1)}(|c_n-c_{n-1}|+|d_n-d_{n-1}|)\leq C\rho^{(n-1)(2+\gamma)}}
	\end{equation}
	and the constants $C>0$ and $0<\rho<<1$ are universal.	
\end{Proposition}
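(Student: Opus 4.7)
My plan is to mirror the induction argument of Proposition~\ref{induction1}, substituting the decay exponent $2+\gamma$ for $2$ and exploiting the sharper smallness scaling of A\ref{assumption5} in place of A\ref{assumption4}. For the base case $n=0$ I would take $P_{-1}=P_0\equiv 0$; after the standard normalization reducing to $\|u\|_{L^\infty(Q_1)}\leq 1$, the estimates~\eqref{estimate1}--\eqref{estimate2} and the balance condition hold trivially.

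Assume $P_0,\dots,P_k$ have been constructed and introduce the rescaled function
\[
v_k(x,t) \coloneqq \frac{(u-P_k)(\rho^k x,\rho^{2k}t)}{\rho^{k(2+\gamma)}},\qquad (x,t)\in Q_1,
\]
so that $|v_k|\leq 1$ in $Q_1$ by the inductive hypothesis. A direct chain-rule computation shows that $v_k$ is an $L^p$-viscosity solution of
\[
\rho^{k\gamma}(v_k)_t + \sup_{\alpha}\inf_{\beta}\Bigl[-\tr\bigl(A_{\alpha,\beta}(\rho^k x,\rho^{2k}t)(\rho^{k\gamma}D^2 v_k + d_k)\bigr)\Bigr] + c_k = f(\rho^k x,\rho^{2k}t)
\]
in $Q_1$. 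Assumption~A\ref{assumption5} delivers the two crucial smallness features $|A_{\alpha,\beta}(\rho^k\cdot,\rho^{2k}\cdot) - \bar{A}_\beta(0,0)|\leq \varepsilon_3\rho^{k\gamma}$ in $Q_1$ and, by a change of variables, $\|f(\rho^k\cdot,\rho^{2k}\cdot)\|_{L^p(Q_1)}\leq C\varepsilon_3\rho^{k\gamma}$.

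The next step is a third approximation lemma, modelled on Proposition~\ref{third approximation}, furnishing $h\in\mathcal{C}^{2,\bar\gamma}(Q_{3/4})$ that solves the frozen-coefficient Bellman problem
\[
\rho^{k\gamma}h_t + \inf_{\beta}\Bigl[-\tr\bigl(\bar{A}_\beta(0,0)(\rho^{k\gamma}D^2 h + d_k)\bigr)\Bigr] + c_k = 0 \quad \text{in } Q_{3/4},
\]
with $\|v_k-h\|_{L^\infty(Q_{3/4})}\leq \delta$ and universal $\mathcal{C}^{2,\bar\gamma}$ bounds. This would be proved via the usual compactness/contradiction scheme: negate the conclusion, extract a locally uniform limit via the H\"older estimates of \cite{KrySaf,imbersil}, pass to the limit by stability of viscosity solutions, and invoke Evans--Krylov regularity from \cite{krylov1,krylov2}. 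The key point is that upon dividing the equation for $h$ by $\rho^{k\gamma}$ and using the inductive constraint $c_k=-\inf_\beta[-\tr(\bar{A}_\beta(0,0)d_k)]$ to balance the zero-order term, the resulting equation becomes a uniformly parabolic Bellman equation with $k$-independent ellipticity, so Evans--Krylov delivers $\mathcal{C}^{2,\bar\gamma}$ bounds depending only on $\lambda,\Lambda,d$ and on the uniform bound $|d_k|\leq C$ summed from~\eqref{estimate2}. Taking $\bar P$ to be the second-order Taylor polynomial of $h$ at the origin yields $\sup_{Q_\rho}|h-\bar P|\leq C\rho^{2+\bar\gamma}$ with universally bounded coefficients; choosing $\rho$ small enough that $C\rho^{\bar\gamma-\gamma}\leq 1/2$ and then $\delta\coloneqq\rho^{2+\gamma}/2$ (which in turn fixes $\varepsilon_3$ through the approximation lemma) gives $\sup_{Q_\rho}|v_k-\bar P|\leq \rho^{2+\gamma}$.

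Finally, unscaling by setting $P_{k+1}(x,t)\coloneqq P_k(x,t)+\rho^{k(2+\gamma)}\bar P(\rho^{-k}x,\rho^{-2k}t)$ delivers~\eqref{estimate1} at step $k+1$; the scaling factors $\rho^{k(2+\gamma)},\rho^{k(1+\gamma)},\rho^{k\gamma},\rho^{k\gamma}$ picked up respectively by $a_{k+1}-a_k$, $b_{k+1}-b_k$, $c_{k+1}-c_k$, $d_{k+1}-d_k$ produce~\eqref{estimate2} directly from the universal bound on $\|h\|_{\mathcal{C}^{2,\bar\gamma}}$. The balance condition is preserved automatically: evaluating the equation for $h$ at $(0,0)$ yields $\rho^{k\gamma}h_t(0,0)+c_k+\inf_\beta[-\tr(\bar{A}_\beta(0,0)(\rho^{k\gamma}D^2 h(0,0)+d_k))]=0$, and substituting $c_{k+1}-c_k=\rho^{k\gamma}h_t(0,0)$ together with $d_{k+1}=d_k+\rho^{k\gamma}D^2 h(0,0)$ produces $c_{k+1}+\inf_\beta[-\tr(\bar{A}_\beta(0,0)d_{k+1})]=0$. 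I expect the main technical obstacle to be the rigorous formulation of this third approximation lemma: the rescaled equation for $v_k$ carries both the factor $\rho^{k\gamma}$ and the shift $d_k$ inside the Bellman operator, and one must extract $\mathcal{C}^{2,\bar\gamma}$ estimates for the approximating $h$ that are uniform in $k$. The inductive constraint is precisely what makes this possible, by keeping the frozen Bellman operator balanced at $M=0$ after the appropriate normalization and preventing the inhomogeneities generated by the rescaling from blowing up with $k$.
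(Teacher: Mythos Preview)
Your proposal is correct and follows essentially the same approach as the paper: the same base case $P_{-1}=P_0\equiv 0$, the same rescaling $v_k=(u-P_k)(\rho^k\cdot,\rho^{2k}\cdot)/\rho^{k(2+\gamma)}$, the same compactness/stability approximation by a frozen-coefficient Bellman solution $h$, the same use of the inductive balance condition to reduce the equation for $h$ to a standard Bellman model with $k$-uniform Evans--Krylov estimates, and the same universal choices $\rho=(1/2C)^{1/(\bar\gamma-\gamma)}$, $\delta=\rho^{2+\gamma}/2$. The paper writes the rescaled equation for $v_k$ after dividing through by $\rho^{k\gamma}$ (so the source term becomes $f_k=\rho^{-k\gamma}f(\rho^k\cdot,\rho^{2k}\cdot)$ with $\|f_k\|_{L^p}\leq\varepsilon_3$), but this is only a cosmetic difference from your formulation.
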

\begin{proof}
	 Without loss of generality, we assume that $\|u\|_{L^\infty(Q_1)}\leq1$. As in the Proposition \ref{induction1}, we prove this statement by induction in $n\geq 0$. We split the proof in four steps.
	 \medskip
	 
	 \textbf{Step 1}
	 
	 \medskip
	 
	 Let us define
	 \[
	 P_{-1}(x,t)\equiv P_0(x,t)\equiv0.
	 \]
	Hence, the case $n=0$ is obvious. Suppose the case $n=k$ has been verified, for some $k\in\mathbb{N}$. Let us prove the statement for the case $n=k+1$. For that, we introduce the auxiliary function
	\[
	v_k(x,t)\coloneqq\frac{(u-P_k)(\rho^kx, \rho^{2k}t)}{\rho^{k(2+\gamma)}} \ \ \mbox{in} \ \ Q_1
	\]
	which solves the equation
	\[
	(v_k)_t+\frac{1}{\rho^{k\gamma}}\left(\displaystyle\sup_{\alpha \in {\cal A}}\inf_{\beta \in {\cal B}}[-\tr(A_{\alpha,\beta}(\rho^kx,\rho^{2k}t)(\rho^{k\gamma}D^2 v_k+d_k))]+c_k\right)=f_k,
	\] 
where $f_k(x,t):= \rho^{-k\gamma}f(\rho^kx, \rho^{2k}t)$.
	 \medskip
	
	\textbf{Step 2}
	
	\medskip
	 
	 In order to approximate $v_k$ by a suitable function $h\in \mathcal{C}^{2,\bar{\gamma}}_{loc}(Q_1)$, set
	 \[
	 M_k\coloneqq \rho^{k\gamma}M+d_k.
	 \]
	 From the assumption A\ref{assumption5}, it follows that
	 \begin{equation*}
	 \begin{aligned}
	 &\left|\displaystyle\sup_{\alpha \in {\cal A}}\inf_{\beta \in {\cal B}}[-\tr(A_{\alpha,\beta}(\rho^{k}x,\rho^{2k}t)M_k)]-\displaystyle\inf_{\beta \in {\cal B}}[-\tr(\bar A_{\beta}(0,0)M_k)]\right|\vspace{0.2cm}\\
	 &\quad\quad\leq \displaystyle\sup_{\alpha \in {\cal A}}\sup_{\beta \in {\cal B}}\left|\tr[(A_{\alpha,\beta}(\rho^k x,\rho^{2k}t)-\bar A_{\beta}(0,0))M_k]\right|\\
	 &\quad\quad\leq \displaystyle\sup_{(x,t)\in Q_1}\sup_{\alpha \in {\cal A}}\sup_{\beta \in {\cal B}}|A_{\alpha, \beta}(\rho^kx,\rho^{2k}t)-\bar A_{\beta}(0,0)|\|\rho^{k\gamma}M+d_k\| \vspace{0.2cm}\\
	 &\quad\quad\leq \tilde C(d)\varepsilon_3\rho^{\gamma k}(\|\rho^{k\gamma}M+d_k\|)\vspace{0.1cm}\\
	 &\quad\quad\leq \tilde C(d)\varepsilon_3\rho^{\gamma k}(\|M\|+\|d_k\|).
	 \end{aligned}
	 \end{equation*}
	 
	 From the induction hypothesis and the universal choice of $\rho$, we compute 
	 \begin{equation*}
	 \|d_k\| \leq \sum_{i=1}^{k}C\rho^{(i-1)\gamma} \leq \frac{C(1-\rho^{(k-1)\gamma})}{1-\rho^\gamma} \leq \frac{C}{1-\rho^\gamma}\leq C_0.
	 \end{equation*}
	 Then, we find that
	 \begin{equation}\label{assumption scaling}
	 \begin{aligned}
	 &\frac{1}{\rho^{\gamma k}}\left|\displaystyle\sup_{\alpha \in {\cal A}}\inf_{\beta \in {\cal B}}[-\tr(A_{\alpha,\beta}(\rho^{k}x,\rho^{2k}t)M_k)]-\displaystyle\inf_{\beta \in {\cal B}}[-\tr(\bar A_{\beta}(0,0)M_k)]\right|\vspace{0.2cm}\\
	 &\quad\quad\leq \;C_0\tilde C(d)\varepsilon_3(1+\|M\|).
	 \end{aligned}
	 \end{equation}

Also, observe that
\begin{equation*}
\begin{aligned}
\|f_k\|^{p}_{Q_1}&=\frac{1}{\rho^{k\gamma p}}\displaystyle\int_{Q_1}|f(\rho^k x, \rho^{2k}t)|^p dxdt\\
&=\frac{1}{\rho^{k\gamma p}}\displaystyle\dashint_{Q_{\rho^k}}|f(y,s)|^p dyds\\
&\leq \varepsilon_3^p.
\end{aligned}
\end{equation*}

	  Combining the estimate in \eqref{assumption scaling} and standard stability results, given $\delta>0,$ there exists $\varepsilon_3=\varepsilon_3(\delta)$ that ensures the existence of $h\in\mathcal{C}(Q_{3/4})$ satisfying the equation
	 \begin{equation}\label{h equation}
	 \left \{
	 \begin{array}{ll}
	 h_t + \frac{1}{\rho^{k\gamma}}\displaystyle\inf_{\beta \in {\cal B}} [ - {\tr}\left(\bar A_{\beta}(0,0)(\rho^{k\gamma}D^{2}h+d_k)\right)+c_k] = 0  & \text{in} \quad  Q_{ 3/4}, \\
	 h=v_k                                                                                                   & \text{on} \quad  {\partial}Q_{ 3/4}, 
	 \end{array}
	 \right. 
	 \end{equation}
	 such that 
	 \[
	 \|v_k-h\|_{L^{\infty}(Q_{8/9})}\leq\delta.
	 \]
	 
	 Now, let us prove that $h\in\mathcal{C}^{2,\bar{\gamma}}_{loc}(Q_1)$. In fact, first observe that we can rewrite the equation in \eqref{h equation} in the following way
	 \begin{equation}\label{equation h_2}
	 h_t+\displaystyle\inf_{\beta \in {\cal B}}\left[-\tr\left(\bar A_{\beta}(0,0)\left(D^2h+\frac{d_k}{\rho^{k\gamma}}\right)\right)-\frac{c_k}{\rho^{k\gamma}}\right]=0.
	 \end{equation}
	 
	 Because of the Evans-Krylov's parabolic regularity theory, we know that viscosity solutions to 	 
	 \[
	 h_t+\displaystyle\inf_{\beta \in {\cal B}}[-\tr(\bar A_{\beta}(0,0)D^2v)]=0 \ \ \mbox{in} \ \ Q_{3/4}
	 \]
	 are locally of class $\mathcal{C}^{2,\bar{\gamma}}(Q_{3/4})$, for some $\bar\gamma\in(0,1)$, with estimate 
	 \[
	 \|v\|_{\mathcal{C}^{2,\bar{\gamma}}(Q_{1/2})}\leq C,
	 \]
	  for some universal positive constant $C$. 
	 
	 Moreover, from the induction hypothesis we have
	 \begin{equation*}\label{equaion h_1}
	 \frac{c_k}{\rho^{k\gamma}}+\displaystyle\inf_{\beta \in {\cal B}}\left[-\tr\left(\bar A_{\beta}(0,0)\frac{d_k}{\rho^{k\gamma}}\right)\right]=\frac{1}{\rho^{k\gamma}}\left[c_k+\displaystyle\inf_{\beta \in {\cal B}}[-\tr(\bar A_{\beta}(0,0)d_k)]\right]=0.
	 \end{equation*}
	 
	 Therefore, viscosity solutions to 
	 \begin{equation*}
	 v_t+\displaystyle\inf_{\beta \in {\cal B}}\left[-\tr\left(\bar A_{\beta}(0,0)\left(D^2v+\frac{d_k}{\rho^{k\gamma}}\right)\right)+\frac{c_k}{\rho^{k\gamma}}\right]=0
	 \end{equation*}
	 are of class $\mathcal{C}^{2,\bar{\gamma}}(Q_{3/4})$ locally, for some $\bar\gamma\in(0,1)$,  with estimate 
	 \[
	 \|v\|_{\mathcal{C}^{2,\bar{\gamma}}(Q_{1/2})}\leq C,
	 \] 
	 with $C>0$ a universal constant. Combining this fact with \eqref{equation h_2}, we obtain $h\in\mathcal{C}^{2,\bar{\gamma}}_{loc}(Q_1)$, for some $0<\bar\gamma<1$, such that 
	 $\|h\|_{\mathcal{C}^{2,\bar{\gamma}}(Q_{1/2})}\leq C$.
	 Hence, 
\small{	
	 \[
	 \sup_{Q_{\rho}}\left|h(x,t)-\left[h(0,0)+Dh(0,0)\cdot x+h_t(0,0)t+\frac{1}{2}x^tD^{2}h(0,0)x\right]\right|\leq C \rho^{2+\bar\gamma}.
	 \]
	 }
	 \medskip
	 
	 \textbf{Step 3}
	 
	 \medskip
	 
	Setting
	\[
	\bar P(x,t)=h((0,0)+Dh(0,0)\cdot x+h_t(0,0)\,t+\frac{1}{2}x^tD^2h(0,0)x,
	\]
	from the triangular inequality we have
	\begin{equation*}
	\begin{aligned}
	\sup_{Q_{\rho}}|u(x,t)-\bar P(x,t)|&\leq \sup_{Q_{\rho}}|u(x,t)-h(x,t)|+\sup_{Q_{\rho}}|h(x,t)-\bar P(x,t)|\\
	&\leq \delta + C\rho^{2+\bar\gamma}.
	\end{aligned}
	\end{equation*}
	For $0< \gamma < \bar{\gamma}$ fixed, we make the universal choices 
	\[
	\rho:= \left(\dfrac{1}{2C}\right)^{\frac{1}{\bar{\gamma}-\gamma}} \;\;\; \text{and}\;\;\; \delta\coloneqq\frac{\rho^{2+\gamma}}{2}
	\]
to obtain 
\begin{equation}\label{estimatev_k}
\sup_{Q_{\rho}}|v_k(x,t)-\bar P(x,t)|\leq\rho^{2+\gamma}.
\end{equation}
Observe that the universal choice of $\delta$ determines $\varepsilon_3$.	 
		
\medskip

\textbf{Step 4}

\medskip

Set
\[
P_{k+1}(x,t)\coloneqq P_k(x,t)+\rho^{k(2+\gamma)}\bar P(\rho^{-k}x,\rho^{-2k}t).
\]
The estimate in \eqref{estimatev_k} and the definition of $P_{k+1}$ lead to
\[
\displaystyle\sup_{Q_{\rho^{k+1}}} |u(x,t)-P_{k+1}(x,t)|\leq\rho^{(k+1)(2+\gamma)}.
\]

Furthermore, since $c_{k+1}=c_k+\rho^{k\gamma}h_t(0,0)$ and $d_{k+1}=d_k+\rho^{k\gamma}D^2h(0,0)$, from \eqref{h equation} we obtain that
\[
c_{k+1}+\displaystyle\inf_{\beta \in {\cal B}}(-\tr(\bar A_{\beta}(0,0)\,d_{k+1}))=0.
\]

Because of the $\mathcal{C}^{2,\bar{\gamma}}$-estimates for $h$ we have that 
\[
|h(0,0)|+|Dh(0,0)|+|h_t(0,0)|+|D^2h(0,0)|\leq C,
\]
for some universal positive constant $C$. Hence, from the definition of $P_{k+1}$, we can conclude  
\[
|a_{k+1}-a_{k}|+\rho^{k}|b_{k+1}-b_{k}|+\rho^{2k}(|c_{k+1}-c_{k}|+|d_{k+1}-d_{k}|)\leq C\rho^{k(2+\gamma)}.
\]
The proof is now complete.
\end{proof}

\medskip

In what follows, we prove the $\mathcal{C}^{2,\gamma}$-regularity at the origin.

\begin{proof}[Proof of the Theorem \ref{theorem03}]
From the Proposition \ref{induction2}, we can find a polynomial $\bar{P}$ of the form 
\[
\bar{P}(x,t)\coloneqq \bar{a}+\bar{b}\cdot x+\bar{c} \, t+\frac{1}{2}x^t\bar{d}x
\]
such that $P_n \to \bar{P}$ uniformly in $Q_1$.  The regularity of $h$ implies that there exists a constant $C >0$  such that
\[
|D\bar{P}(0,0)| + \|D^2\bar{P}(0.0)\| \leq C,
\]
with the following estimates:
\[
|a_n-\bar{a}|\leq C\rho^{n(2+\gamma)} \,\,\,\,\, ; \,\,\,\,\, |b_n-\bar{b}|\leq C\rho^{n(1+\gamma)}
\]
\[
|c_n-\bar{c}|\leq C\rho^{n\gamma} \,\,\,\,\, \mbox{and} \,\,\,\,\, |d_n-\bar{d}|\leq C\rho^{n\gamma}.
\]
To conclude the proof, given $0<\rho<r$, take the first integer $n\in\mathbb{N}$ satisfying $r^{n+1}<\rho\leq r^n$. Therefore, we can estimate
\begin{equation*}
\begin{array}{rcl}
& &\displaystyle\sup_{Q_\rho}\left|u(x,t)-\bar{P}(x,t)\right|\vspace{0.2cm}\\
& &\quad \leq \displaystyle \sup_{Q_{r^n}}|u(x,t)-P_n(x,t)|+\sup_{Q_{r^n}}|P_n(x,t) - \bar{P}(x,t)|\vspace{0.2cm}\\
& &\quad\leq\displaystyle \frac{C}{r}r^{(n+1)(2+\gamma)}\vspace{0.2cm}\\
& &\quad\leq \displaystyle C\rho^{2+\gamma}.
\end{array}
\end{equation*}
This finishes the proof of the theorem.
\end{proof}

\begin{Remark}\label{last_rem}
Even though we only prove pointwise estimates at the origin,  Theorem \ref{theorem03} holds true at any point that satisfies assumption A$\ref{assumption5}$.  Also,  notice that we recover the classical ${\mathcal C}^{2,\gamma}$ theory for Bellman equations when assumption A$\ref{assumption5}$ is satisfied for every point in $Q_1$,  since we would have $A_{\alpha,\beta}(x,t) = \bar{A}_{\beta}(x,t)$ for every $(x,t) \in Q_1$  and $\bar{A}_\beta$ is of class ${\mathcal C}^\gamma$.  
\end{Remark}

\medskip 

\noindent{\bf Acknowledgements:}  PA is partially supported by FAPERJ (Grant \# E-26/ 201.609/2018) and CAPES - Brazil. GR is also supported by CAPES - Brazil. MS is supported by the CONACyT-Mexico. This study was financed in part by the Coordenação de Aperfeiçoamento de Pessoal de Nível Superior - Brazil (CAPES) - Finance Code 001.

\bigskip

\bibliography{bib_2018}

\bibliographystyle{plain}

\bigskip

\noindent\textsc{P\^edra D. S. Andrade}\\
Department of Mathematics\\
Pontifical Catholic University of Rio de Janeiro -- PUC-Rio\\
22451-900, G\'avea, Rio de Janeiro-RJ, Brazil\\
\noindent\texttt{pedra.andrade@mat.puc-rio.br}

\vspace{.15in}

\noindent\textsc{Giane C. Rampasso}\\
Department of Mathematics\\
University of Campinas -- IMECC -- Unicamp\\
13083-859, Cidade Universitária, Campinas-SP, Brazil\\
\noindent\texttt{girampasso@ime.unicamp.br}

\vspace{.15in}

\noindent\textsc{Makson S. Santos}\\
Centro de Investigación en Matemáticas - CIMAT\\
36023, Valenciana, Guanajuato, Gto, Mexico.\\
\noindent\texttt{makson.santos@cimat.mx}

\end{document}